\documentclass [11pt, a4paper] {article}
\usepackage{bbm}
\usepackage{amssymb,amsmath,amsthm}
\pagestyle{plain}
\DeclareMathOperator{\supp}{supp}
\DeclareMathOperator{\diam}{diam}
\DeclareMathOperator{\dive}{div}

\newcommand\blfootnote[1]{%
  \begingroup
  \renewcommand\thefootnote{}\footnote{#1}%
  \addtocounter{footnote}{-1}%
  \endgroup
}
\newtheorem{theorem}{Theorem}
\newtheorem{lemma}[theorem]{Lemma}

\newtheorem{Lemma}[theorem]{Lemma}

\newtheorem{definition}[theorem]{Definition}

\newtheorem{remark}[theorem]{Remark}

\newcommand{\R}{{\mathbbm{R}}}
\usepackage[%
   bookmarks=true,
   unicode,
   plainpages=false,
   ]{hyperref}
\title{\bf Classical solutions of the divergence equation with Dini-continuous datum}
\author{Luigi C. Berselli and Placido Longo
  \\
  Dipartimento di Matematica
  \\
  Universit\`a di Pisa
  \\
  Via F. Buonarroti 1/c, 
  \\
  I-56127, Pisa, Italia.
  \\
 email: luigi.carlo.berselli@unipi.it, placido.longo@unipi.it}
\date{}
\begin{document}
\maketitle
\begin{abstract}
  We consider the boundary value problem associated to the divergence operator with
  vanishing Dirichlet boundary conditions and we prove the existence of classical
  solutions under slight assumptions on the regularity of the datum.
\end{abstract}
\blfootnote{2000 MSC. Primary: 26B12; Secondary: 35C05,
  35F15.} 
 \blfootnote{ Keywords: Divergence equation, classical solutions, boundary value problem.  }%
\section{Introduction}
In this paper we deal with the existence of classical solutions for the boundary value problem 
\begin{equation}
  \label{eq:div}
  \left\{
    \begin{aligned}
      \dive u&=F\qquad\text{in }\Omega,
      \\
      u&=0\qquad\text{on }\partial\Omega,
    \end{aligned}
  \right.
  \end{equation}
namely we look for solutions $u:\Omega\to\R^{n}$, such that $u\in C^{1}(\Omega)\cap
C^{0}(\overline{\Omega})$. Here $\Omega\subset\R^{n}$ is a smooth and bounded domain,
while $F$ is a given continuous function satisfying the compatibility condition
$\int_{\Omega}F(x)\,dx=0$. This is a classical problem in mathematical fluid mechanics,
strictly connected with the Helmholtz decomposition and the div-curl lemma (see Kozono and
Yanagisawa~\cite{KY2009b}). We recall that by dropping the boundary condition a solution
of the first order system~\eqref{eq:div} can be readily obtained by taking the gradient of
the Newtonian potential of $F$.  These aspects are extensively covered in
Galdi~\cite[Ch. 3]{Gal2011} with special attention to the work of
Bogovski\u\i~\cite{Bog1980}, where the problem is solved in the Sobolev spaces
$H_{0}^{1,p}(\Omega)$.  Further developments may also be found in Borchers and
Sohr~\cite{BS1990}. For different approaches and results the reader could consider the
books by Lady\v{z}henskaya~\cite{Lad1969} and Tartar~\cite{Tar1978}, especially regarding
the solution in the Hilbert case, while Amrouche and Girault~\cite{AG1994} used the
negative norm theory developed by Ne\v{c}as~\cite{Nec1967}.

Our approach follows closely the Bogovski\u\i's one, where the representation
formula~\eqref{eq:bogovskij} below, in analogy with the ``cubature'' formulae of Sobolev,
gives explicitly a special solution of the problem~\eqref{eq:div}, which \textit{per se}
has infinitely many solutions.  The formula~\eqref{eq:bogovskij} turns out to be extremely
flexible in the applications to many different settings as, for instance, the recent
results for weighted $L^{p(x)}$-spaces (see Huber~\cite{Hub2011}). Classical results in
H\"older spaces have been shown in Kapitansk\u\i\ and Piletskas~\cite{KP1984}, as a
corollary of a more general result, which seems to be obtained in a way different from
ours. For the H\"older case see also the recent review in Csat\'o, Dacorogna, and
Kneuss~\cite{CDK2012}.  In addition, we also note that the non-uniqueness feature of the
first order system~\eqref{eq:div} allows some existence results with more regularity than
expected from the usual theorems, as the striking results of Bourgain and
Brezis~\cite{BB2003}, coming from a non-linear selection principle of the solutions to a
linear problem (see the extensions to the Dirichlet problem in Bousquet, Mironescu, and
Russ~\cite{BMR2013}).

Our interest for the problem is twofold: on one side, we want to investigate the results
close to the limiting case $F\in L^\infty(\Omega)\cap C(\Omega)$, for which
counterexamples to the existence of a solution are known (see Bourgain and
Brezis~\cite{BB2003} and Dacorogna, Fusco, and Tartar~\cite{DFT2003}); on the other side,
we are interested in relaxing as much as possible the assumptions needed to get classical
solutions. This is motivated by the aim of finding weaker sufficient assumptions which
allow to construct classical solutions to fluid mechanics problems.  Since the continuity
of $F$ is not enough to that purpose, we went back to the very old results by
Dini~\cite{Dini1902} and Petrini~\cite{Pet1908} about the Poisson equation, and we
consider the problem with the additional hypothesis that $F$ is Dini-continuous (see below
for a definition). Our proof follows closely an argument used by Korn to obtain a similar
regularity result for the second derivative of the Newtonian potential (see Gilbarg and
Trudinger~\cite[Ch.~4]{GT1998}) and exploits the property of the Dini continuity to
``regularize'' the singularity in the second derivative of the potential, outside the
theory of Calder\`on-Zygmund of singular integral operators.  About the boundary
condition, our proof is based on some ``new'' insight of the formula (in the sense that we
made some simple observations on the Bogovski\u\i{} formula which we cannot find stated
explicitly elsewhere in literature) still valid when the datum $F$ cannot be approximated
with compactly supported smooth data. Such an approximation seems to play a fundamental
role in Sobolev or Orlicz spaces.

We wish to mention that the link between Dini continuity and existence of classical
solutions in fluid mechanics started with Shapiro~\cite{Sha1976} in the steady case and
found a very interesting application with the paper of Beir\~ao da Veiga~\cite{Bei1984},
where the 2D Euler equations for incompressible fluids are solved in a critical spaces for
the vorticity. More recently, the same results have been also employed by
Koch~\cite{Koch2002} and in~\cite{BB2015} to study fine properties of the long-time
behavior of the Euler equations. Finally, the interest for classical solutions of the
Stokes system has been revived in the recent papers of Beir\~ao da
Veiga~\cite{Bei2014,Bei2016}, that provided a further motivation to our analysis of the
divergence and curl operator, since they are among the building blocks of the theory. We
also point out that the system~\eqref{eq:div} is not elliptic, hence the well-known
results for elliptic equations and systems do not apply directly.

To conclude we also mention that the problem of classical solutions for the curl equation
(again with Dirichlet condition) will be treated elsewhere~\cite{BL2017b} following the
same method, by using similar (but more complicated) representation formula, known in
that case.
\section{Notation and preliminary results}
In this section we recall the main definitions we will use, as well as some basic facts
about the representation formula developed by Bogovski\u\i.  The results of this section are
well-known, but some of them on the role of the boundary condition are not explicitly
available in the literature.

In the following we denote by $B(x,R)=\{y\in\R^{n}:|y-x|<R\}$, the ball of radius $R$
centered at $x$, by
$S^{n-1}=\{y\in\R^{n}:|y|=1\}$ the unit sphere of $\R^{n}$, and by $|S^{n-1}|$ its
$(n-1)$-dimensional measure. 

We also denote by $C_{D}(\Omega)$ the space of the (uniformly) Dini-continuous functions
F, \textit{i.e.}, such that if one introduces the modulus of (uniform) continuity
\begin{equation*}
\omega(F,\rho)=\sup_{\stackrel{x,y\in\Omega} {|x-y|<\rho}}|F(x)-F(y)|,
\end{equation*}
the function $\omega(F,\rho)/\rho$ is integrable around $0^{+}$. We equip the space of
Dini continuous functions with the following norm
\begin{equation*}
  \|F\|_{C_D}=\max_{x\in
    \overline{\Omega}}|F(x)|+\int_0^{\diam(\Omega)}\frac{\omega(F,\rho)}{\rho}\,d\rho, 
\end{equation*}
and it turns out to be a Banach space. We remark that, by the uniform continuity, any
function in $C_{D}(\Omega)$ may be extended up to the boundary of $\Omega$ with the same
modulus of continuity.  We observe also that $C^{0,\alpha}(\Omega)\subset C_D(\Omega)$ for
all $0<\alpha\leq1$ and recall that its relevance in partial differential equations comes
from the result that, if $f\in C_D(\Omega)$, then the solution of the Poisson equation
\begin{equation*}
  \Delta u=f
\end{equation*}
with zero Dirichlet conditions satisfies $D^2u\in C(\Omega)$ (see for instance Gilbarg and
Trudinger~\cite[Pb.~4.2]{GT1998}.
\subsection{Bogovski\u\i's formula and its variants}
The aim of this section is to provide a representation formula for a solution of the
divergence problem, due to Bogovski\u\i~\cite{Bog1980}, as well as several useful variants
and consequences. 

Unless differently specified (namely, in the last section), the following hypotheses will
be tacitly assumed throughout all the paper. Let $B$ denote the open unit ball of
$\R^{n},\,n\geq 2$, centered at the origin.  The scalar function $\psi\in
C^{\infty}_{0}(\R^{n})$ with $\supp\psi\subseteq B$, and it is not vanishing identically.
We will denote by $\partial_{j}\psi$ the partial derivative of $\psi$ with respect to its
$j^{th}$ argument. The domain $\Omega$ will be a bounded open subset of $\R^{n}$,
star-shaped with respect to any point of $\overline{B}$.

The main results to be proved are the following Theorems.
\begin{theorem}
  \label{thm:theorem1}
  Assume the previous hypotheses. Let $q>n$ and let $F\in L^{q}(\Omega)$. Then:
  \begin{enumerate} 
\item The Bogovski\u\i's formula
    \begin{equation}
      \label{eq:bogovskij}
      v(x)=\int_{\Omega}F(y)\left[\frac{x-y}{|x-y|^{n}}\int_{|x-y|}^{+\infty}\psi\left(y+
          \xi\frac{x-y}{|x-y|}\right)\xi^{n-1}d\xi\right]\,dy, 
    \end{equation}
    defines for any $x\in\R^{n}$ (and not only almost everywhere) a function
    $v:\,\R^{n}\to\R^n$;

  \item $\quad v(x)=0\quad \forall\, x\in\R^n\backslash\Omega$;
  \item For any $q>n$
    \begin{equation*}
      | \,v(x)\,|\leq  c\;\| F\|_{L^{q}(\Omega)}\quad\forall x\in\R^{n},
    \end{equation*}
    where $ c$ depends only on $n$, $\psi$, $\diam\,\Omega$, and $q$;
  \item $$
    v(x)=\int_{\Omega}F(y)\left[(x-y)\int_{1}^{\infty}\psi\left(y+\alpha(x-y)\right)\alpha^{n-1}d\alpha\right]
    \,dy;$$
  \item $$ v(x)=\int_{\Omega}F(y)\left[\frac{x-y}{|x-y|^{n}}\int_{0}^{\infty}
      \psi\left(x+r\frac{x-y}{|x-y|}\right)(|x-y|+r)^{n-1}dr\right]\,dy; $$
  \item $$
    v(x)=\int_{x-\Omega}F(x-z)\;\frac{z}{|z|^{n}}\;\int_{0}^{\infty}\psi\left(x+r\frac{z}{|z|}\right)\;(|z|+r)^{n-1}\,dr\,dz,$$
    where $x-\Omega=\{z\in\R^{n}:\exists\, y\in\Omega\text{ such that } \;z=x-y\}$.
  \end{enumerate} 
\end{theorem}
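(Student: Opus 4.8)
The plan is to reduce every assertion to a single pointwise estimate on the kernel
\[
K(x,y)=\frac{x-y}{|x-y|^{n}}\int_{|x-y|}^{+\infty}\psi\Big(y+\xi\,\tfrac{x-y}{|x-y|}\Big)\,\xi^{n-1}\,d\xi ,
\]
and then to obtain the alternative representations by elementary changes of variable. Writing $\omega=\frac{x-y}{|x-y|}$ and $r=|x-y|$, so that $x=y+r\omega$, the crucial observation is that, since $\supp\psi\subseteq B$, the integrand $\psi(y+\xi\omega)$ vanishes unless $|y+\xi\omega|<1$; in particular $\xi\le|y+\xi\omega|+|y|<1+\diam\Omega$, so the inner integral is really extended over a bounded $\xi$-interval.

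First I would establish the weakly singular bound. From the previous remark,
\[
\Big|\int_{|x-y|}^{+\infty}\psi(y+\xi\omega)\,\xi^{n-1}\,d\xi\Big|\le \|\psi\|_{\infty}\int_{0}^{1+\diam\Omega}\xi^{n-1}\,d\xi = c_{1}(n,\psi,\diam\Omega),
\]
whence $|K(x,y)|\le c_{1}\,|x-y|^{1-n}$. This single estimate yields (i) and (iii) at once: by Hölder's inequality with exponent $q'=q/(q-1)$,
\[
|v(x)|\le c_{1}\int_{\Omega}|F(y)|\,|x-y|^{1-n}\,dy\le c_{1}\,\|F\|_{L^{q}(\Omega)}\Big(\int_{\Omega}|x-y|^{-(n-1)q'}\,dy\Big)^{1/q'},
\]
and the last integral is finite, uniformly in $x$, precisely because $q>n$ forces $(n-1)q'<n$ (polar coordinates on a ball of radius $\diam\Omega$ bound it by a constant depending on $n,q,\diam\Omega$). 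Absolute convergence for \emph{every} fixed $x$ — not merely almost every $x$, which is the point distinguishing this weakly singular kernel from a Calder\'on--Zygmund one — is exactly what gives (i), and the displayed chain is (iii).

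The identities (iv)--(vi) are then pure bookkeeping, legitimate for each fixed $x$ thanks to the absolute convergence just proved and the compact support of $\psi$. In the inner integral of $K$ the substitution $\xi=\alpha|x-y|$ turns it into $|x-y|^{n}\int_{1}^{\infty}\psi(y+\alpha(x-y))\alpha^{n-1}\,d\alpha$, which cancels the prefactor $|x-y|^{-n}$ and produces (iv); the substitution $\xi=|x-y|+r$, using $y+|x-y|\omega=x$, gives (v); and the change of variable $z=x-y$ in the outer integral of (v) gives (vi).

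The heart of the matter is the boundary vanishing (ii), which I would deduce from form (iv) together with the star-shapedness of $\Omega$ with respect to $\overline{B}$ (which in particular forces $\overline{B}\subseteq\Omega$). Fix $x\notin\Omega$ and $y\in\Omega$. The integrand of (iv) is nonzero only if the point $p=(1-\alpha)y+\alpha x$ lies in $B$ for some $\alpha\ge1$. Suppose such an $\alpha>1$ existed; since $p\in B\subseteq\Omega$ and $\Omega$ is star-shaped with respect to $p\in\overline{B}$, the segment $[p,y]$ would lie in $\Omega$. But solving $p=(1-\alpha)y+\alpha x$ for $x$ gives $x=\tfrac1\alpha p+\tfrac{\alpha-1}{\alpha}y$, a convex combination of $p$ and $y$ because $\alpha>1$, so $x\in[p,y]\subseteq\Omega$, contradicting $x\notin\Omega$; the case $\alpha=1$ is excluded since then $p=x\notin B$. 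Hence the inner integral in (iv) vanishes for every $y\in\Omega$, and $v(x)=0$. I expect this geometric step — rather than the routine estimates — to be the real content, since it is precisely where the hypothesis on $\Omega$ enters and where the vanishing of $v$ outside $\Omega$ (the Dirichlet condition of the problem) gets encoded.
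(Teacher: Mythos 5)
Your proposal is correct and follows essentially the same route as the paper: the weakly singular bound $|N(x,y)|\le c\,|x-y|^{1-n}$ (the paper's Lemma~4) combined with H\"older's inequality gives \textit{i)} and \textit{iii)}, and the substitutions $\xi=\alpha|x-y|$, $\xi=r+|x-y|$, $z=x-y$ give \textit{iv)}--\textit{vi)}, exactly as in the paper. Your geometric argument for \textit{ii)} is the paper's Lemma~5 in different clothing: you exhibit $x$ as a convex combination of $y\in\Omega$ and the point $p\in B$ where $\psi\neq 0$, so star-shapedness with respect to $p$ would force $x\in\Omega$; the paper phrases the same observation for the kernel in form \textit{i)} rather than form \textit{iv)}, which is an immaterial difference.
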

Important consequences of the Theorem~\ref{thm:theorem1} are the next regularity results
for the ``potential'' $v$ in the interior, as well as at the boundary.
\begin{theorem}
  \label{thm:theorem2}
  Under the same hypotheses of Theorem~\ref{thm:theorem1}, if $F\in
  C_{0}^{\infty}(\Omega)$ then $v\in C_{0}^{\infty}(\Omega)$.  
\end{theorem}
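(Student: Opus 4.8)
The plan is to establish two separate facts: that $v$ is of class $C^\infty$ on all of $\R^n$, and that its support is a compact subset of $\Omega$. Together with part~(ii) of Theorem~\ref{thm:theorem1} these give $v\in C_0^\infty(\Omega)$. For the smoothness I would start from the sixth representation in Theorem~\ref{thm:theorem1} and pass to polar coordinates in the variable $z$, writing $z=\rho\omega$ with $\rho>0$ and $\omega\in S^{n-1}$. Since $dz=\rho^{n-1}\,d\rho\,d\omega$ and $z/|z|^{n}=\rho^{1-n}\omega$, the Jacobian factor $\rho^{n-1}$ exactly cancels the singular factor $\rho^{1-n}$, and one obtains
\[
 v(x)=\int_{S^{n-1}}\omega\left[\int_0^\infty\!\!\int_0^\infty F(x-\rho\omega)\,\psi(x+r\omega)\,(\rho+r)^{n-1}\,dr\,d\rho\right]d\omega .
\]
Because $F\in C_0^\infty(\Omega)$, the constraint $x-z\in\Omega$ implicit in the domain $x-\Omega$ is automatically enforced by the support of $F$, so the $\rho$-integration may be extended to $(0,\infty)$ with no loss.

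The crucial point of this rewriting is that the integrand is now a genuinely smooth function of $x$, with no singularity at $\rho=0$. For fixed $\omega$ the region in $(\rho,r)$ where the integrand is nonzero is bounded, since $F(x-\rho\omega)\neq0$ forces $x-\rho\omega\in\supp F$ and $\psi(x+r\omega)\neq0$ forces $x+r\omega\in\supp\psi\subseteq B$; both confine $\rho$ and $r$ to bounded ranges, locally uniformly in $x$. Hence every $x$-derivative of the integrand is continuous, compactly supported in $(\rho,r,\omega)$, and locally bounded, and repeated differentiation under the integral sign is justified by dominated convergence. This yields $v\in C^\infty(\R^n)$.

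It remains to locate the support. From the same polar formula, $v(x)\neq0$ requires a direction $\omega$ along which $F(x-\rho\omega)\neq0$ for some $\rho>0$ and $\psi(x+r\omega)\neq0$ for some $r>0$; that is, there exist $p\in\supp F$ and $b\in\supp\psi$ with $x-p=\rho\omega$ and $b-x=r\omega$. Thus $x$ lies on the open segment joining $p$ and $b$. Consequently $\supp v$ is contained in the set
\[
 K=\{(1-t)\,p+t\,b:\ p\in\supp F,\ b\in\supp\psi,\ t\in[0,1]\},
\]
which is compact, being the image of the compact set $\supp F\times\supp\psi\times[0,1]$ under a continuous map. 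Since $\Omega$ is star-shaped with respect to every point of $\overline B\supseteq\supp\psi$, each segment from $b\in\supp\psi$ to $p\in\supp F\subset\Omega$ lies entirely in $\Omega$; hence $K\subset\Omega$, and being compact it is compactly contained in $\Omega$. Therefore $\supp v\Subset\Omega$ and $v\in C_0^\infty(\Omega)$.

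I expect the main obstacle to be the rigorous justification of differentiation under the integral sign: although the singularity disappears after the change to polar coordinates, one must check that all $x$-derivatives of the integrand admit integrable, locally uniform majorants, which requires tracking how the supports of $F(x-\rho\omega)$ and $\psi(x+r\omega)$ move with $x$. The support identification is geometrically transparent but hinges on using the star-shapedness hypothesis with respect to the whole of $\overline B$, which is exactly the extra structural input that the earlier parts of Theorem~\ref{thm:theorem1} do not exploit.
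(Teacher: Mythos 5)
Your proposal is correct and takes essentially the same route as the paper: both arguments rest on representation \textit{vi)} of Theorem~\ref{thm:theorem1}, in which the $x$-dependence sits only in the smooth, compactly supported factors $F$ and $\psi$, so that differentiation under the integral sign yields smoothness, and both locate the support through the same segment/star-shapedness geometry (your set $K$ is, up to replacing $\overline{B}$ by $\supp\psi$, the paper's set $E$, used there via formula \textit{iv)}). The only real difference is cosmetic: you cancel the factor $|z|^{1-n}$ by passing to polar coordinates before differentiating, whereas the paper keeps that factor, which is independent of $x$, as an integrable majorant.
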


\begin{theorem}
  \label{thm:theorem3}
  Under the same assumptions of Theorem~\ref{thm:theorem1}, it follows that\\ 
  $v\in C^{0}(\R^{n})\subseteq C^{0}(\overline{\Omega})$.
\end{theorem}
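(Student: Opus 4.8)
The plan is to obtain Theorem~\ref{thm:theorem3} as a soft consequence of the uniform pointwise estimate in Theorem~\ref{thm:theorem1}~(iii) together with the regularity already established for smooth compactly supported data in Theorem~\ref{thm:theorem2}, via a density argument. The crucial observation is that the map $F\mapsto v$ defined by~\eqref{eq:bogovskij} is linear in $F$ (the datum enters linearly under the integral sign), and that the estimate $|v(x)|\le c\,\|F\|_{L^{q}(\Omega)}$ holds for \emph{every} $x\in\R^{n}$, not merely almost everywhere. This is exactly the feature that will upgrade convergence in $L^{\infty}$ to genuine uniform convergence, and hence let us pass continuity to the limit.

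First I would fix $F\in L^{q}(\Omega)$ with $q>n$ and choose a sequence $F_{k}\in C_{0}^{\infty}(\Omega)$ with $F_{k}\to F$ in $L^{q}(\Omega)$; this is possible since $q<\infty$ and $C_{0}^{\infty}(\Omega)$ is dense in $L^{q}(\Omega)$. Denote by $v_{k}$ the function associated to $F_{k}$ through~\eqref{eq:bogovskij}. By Theorem~\ref{thm:theorem2} each $v_{k}$ belongs to $C_{0}^{\infty}(\Omega)$, and extending it by zero outside $\Omega$ (consistently with Theorem~\ref{thm:theorem1}~(ii)) we may regard $v_{k}\in C^{0}(\R^{n})$. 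By linearity the difference $v-v_{k}$ is the function associated to the datum $F-F_{k}$, so Theorem~\ref{thm:theorem1}~(iii) yields
\begin{equation*}
  \sup_{x\in\R^{n}}|v(x)-v_{k}(x)|\le c\,\|F-F_{k}\|_{L^{q}(\Omega)}\longrightarrow 0,
\end{equation*}
as $k\to\infty$. Thus $v_{k}\to v$ uniformly on $\R^{n}$; being a uniform limit of continuous functions, $v$ is continuous on $\R^{n}$, and its restriction to $\overline{\Omega}$ is continuous, which gives $v\in C^{0}(\R^{n})\subseteq C^{0}(\overline{\Omega})$.

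I do not expect a serious obstacle here, since the argument is purely a functional-analytic passage to the limit; the delicate analytic work has already been absorbed into Theorem~\ref{thm:theorem1}~(iii) and Theorem~\ref{thm:theorem2}. The one point deserving care is that one must use the genuinely pointwise, everywhere-defined nature of $v$ from Theorem~\ref{thm:theorem1}~(i) and of the estimate~(iii): were the bound valid only almost everywhere, one could conclude only that $v\in L^{\infty}$ up to a null set, not the continuity of the specific representative defined by~\eqref{eq:bogovskij}. A direct attack, estimating $|v(x)-v(x')|$ from the representation formula, would instead force one to confront the apparently non-integrable singularity $\tfrac{x-y}{|x-y|^{n}}$ near $y=x$; the density argument is precisely what lets us sidestep this.
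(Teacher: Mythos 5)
Your proposal is correct and coincides with the paper's own argument: both approximate $F$ by $C_{0}^{\infty}(\Omega)$ functions in $L^{q}$, use Theorem~\ref{thm:theorem2} to get $v_{k}\in C_{0}^{\infty}(\R^{n})$, and combine linearity with the everywhere-valid bound of Theorem~\ref{thm:theorem1}~\textit{ii)}--\textit{iii)} to obtain uniform convergence $v_{k}\to v$ on $\R^{n}$, hence continuity of the limit. Your write-up merely makes explicit the linearity step that the paper leaves implicit.
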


The proofs require several lemmas, and we start with an estimate for the kernel that
appears in the Bogovski\u\i's formula~\eqref{eq:bogovskij}.

\begin{definition}Let $\psi$ be as above. Then we set
\begin{equation*}
N(x,y):=\frac{x-y}{|x-y|^{n}}\int_{|x-y|}^{+\infty}\psi\left(y+\xi\frac{x-y}{|x-y|}\right)\xi^{n-1}d\xi,
\end{equation*}
and we remark that we can rewrite the Bogovski\u\i's formula as follows
\begin{equation*}
  v(x)=\int_\Omega N(x,y)\,F(y)\,dy.
\end{equation*}
\end{definition}
\begin{Lemma}
  \label{thm:lemma4}
There exists a constant $c>0$, depending only on  $n$, $\psi$, and $\diam\Omega$, such that
\begin{equation*}
|N(x,y)|\leq c|x-y|^{1-n}\quad \forall\, x,y\in\R^{n}:\;x\neq y.
\end{equation*}
\end{Lemma}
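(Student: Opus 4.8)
The plan is to bound the kernel $N(x,y)$ by estimating the integral factor directly.The plan is to split the kernel into its vector prefactor and a scalar integral, and to reduce everything to a uniform bound on the latter. Writing $\omega:=\frac{x-y}{|x-y|}$ for the unit vector pointing from $y$ to $x$, one has exactly
\[
  |N(x,y)|=\frac{|x-y|}{|x-y|^{n}}\;\Bigl|\int_{|x-y|}^{+\infty}\psi\bigl(y+\xi\,\omega\bigr)\,\xi^{n-1}\,d\xi\Bigr|
  =|x-y|^{1-n}\,|I(x,y)|,
\]
since the prefactor $\frac{x-y}{|x-y|^{n}}$ has modulus precisely $|x-y|^{1-n}$. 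Thus the claimed estimate is equivalent to showing that the scalar quantity $I(x,y):=\int_{|x-y|}^{+\infty}\psi(y+\xi\,\omega)\,\xi^{n-1}\,d\xi$ is bounded by a constant depending only on $n$, $\psi$, and $\diam\Omega$. In particular the $+\infty$ upper limit is only apparent, and I must first argue that the integral is finite.

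The whole point is to exploit the compact support $\supp\psi\subseteq\overline{B}$. The integrand vanishes unless the point $y+\xi\,\omega$ lies in $\overline{B}$, so only the set $J:=\{\xi\ge |x-y|:\ y+\xi\,\omega\in\overline{B}\}$ contributes. I would extract two geometric facts about $J$. First, $\{\xi\in\R:\ |y+\xi\,\omega|\le 1\}$ is the solution set of a convex quadratic inequality in $\xi$ (its leading coefficient is $|\omega|^{2}=1$), hence an interval whose length is the chord cut by $\omega$ on the unit ball, namely at most the diameter $2$; therefore $|J|\le 2$. Second, on $J$ every point $y+\xi\,\omega$ satisfies $|y+\xi\,\omega|\le 1$, so by the triangle inequality $\xi=|(y+\xi\,\omega)-y|\le |y+\xi\,\omega|+|y|\le 1+|y|$. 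Since the origin belongs to $\overline{B}\subseteq\overline{\Omega}$ and the relevant base point $y$ lies in $\overline{\Omega}$, we get $|y|\le\diam\Omega$, whence $\xi\le 1+\diam\Omega$ on $J$. This simultaneously shows finiteness and supplies the needed bounds (note that, as a byproduct, $N(x,y)=0$ as soon as $|x-y|>1+\diam\Omega$, consistent with $v$ being supported in $\overline{\Omega}$).

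Combining the two facts gives immediately
\[
  |I(x,y)|\le \|\psi\|_{\infty}\int_{J}\xi^{n-1}\,d\xi
  \le \|\psi\|_{\infty}\,(1+\diam\Omega)^{n-1}\,|J|
  \le 2\,\|\psi\|_{\infty}\,(1+\diam\Omega)^{n-1}=:c,
\]
a constant depending only on $n$, $\psi$, and $\diam\Omega$, which together with the prefactor identity yields $|N(x,y)|\le c\,|x-y|^{1-n}$ and closes the argument. The only genuinely delicate step is the geometric control of the effective integration set $J$: one must translate the compact support of $\psi$ into both the chord-length bound $|J|\le 2$ and the uniform upper bound $\xi\le 1+\diam\Omega$ on the integration variable. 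Everything else is a routine size estimate of the resulting elementary integral.
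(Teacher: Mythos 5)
Your proof is correct and takes essentially the same route as the paper's: both reduce the claim to a uniform bound on the scalar integral by using the compact support of $\psi$ together with $|y|\le \diam\Omega$ to confine the effective integration variable to $[0,\,1+\diam\Omega]$ (your explicit restriction to the ``relevant'' points $y\in\overline{\Omega}$ is precisely the step the paper performs implicitly via the inequality $\left|y+\xi\tfrac{x-y}{|x-y|}\right|\ge\left|\xi-|y|\right|$). The only difference is cosmetic: your chord-length observation $|J|\le 2$ sharpens the constant to $2\,\max|\psi|\,(1+\diam\Omega)^{n-1}$ in place of the paper's $\max|\psi|\,(1+\diam\Omega)^{n}$, but the argument is otherwise identical.
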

\begin{proof}
  Since the smooth function $\psi$ vanishes outside $B$, then it follows that
  $\psi\left(y+\xi\frac{x-y}{|x-y|}\right)=0$, when
  $\left|y+\xi\frac{x-y}{|x-y|}\right|\geq 1$. Since
  $\left|y+\xi\frac{x-y}{|x-y|}\right|\geq\left|\xi-|y|\,\right|$, we have that
  $\psi\big(y+\xi\frac{x-y}{|x-y|}\big)$ is zero for $\xi\in \R^+$ such that
  $\xi>1+\diam\,\Omega$, and therefore
  \begin{equation*}
    \left| \int_{|x-y|}^{+\infty}\psi\left(y+\xi\frac{x-y}{|x-y|}\right)\xi^{n-1}d\xi
    \right|\leq (1+\diam\,\Omega)^{n}\cdot\max_{x\in\R^{n}}|\psi(x)| =c,
  \end{equation*}
  and the lemma follows.
\end{proof}
The following simple remarks have important consequences in the study of the support of $v$.
\begin{Lemma}
  \label{thm:lemma5}
  If $x\notin\Omega$ and $\psi\left(y+\xi\frac{x-y}{|x-y|}\right)\neq 0$ holds true for
  some $\xi>|x-y|$, then $y\notin\Omega$.
\end{Lemma}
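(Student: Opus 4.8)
The plan is to exploit the geometry of the ray on which the three relevant points lie, together with the star-shapedness of $\Omega$ with respect to every point of $\overline{B}$. Write $\nu=\frac{x-y}{|x-y|}$ (note that the statement implicitly requires $x\neq y$, since otherwise the direction is undefined and the conclusion $y\notin\Omega$ is immediate from $x\notin\Omega$), and set $w=y+\xi\,\nu$. Since $\psi(w)\neq 0$ and $\supp\psi\subseteq B$, the point $w$ must lie in $B$; in particular $w\in\overline{B}$, so by hypothesis $\Omega$ is star-shaped with respect to $w$.

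Next I would observe that $x$, $y$, and $w$ are collinear, lying on the ray issued from $y$ in the direction $\nu$: indeed $x=y+|x-y|\,\nu$ while $w=y+\xi\,\nu$. Because $0<|x-y|<\xi$, the point $x$ is a proper convex combination of $y$ and $w$, namely
\[
x=(1-t)\,y+t\,w,\qquad t=\frac{|x-y|}{\xi}\in(0,1),
\]
so that $x$ lies strictly inside the segment $[y,w]$.

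To conclude, I would argue by contradiction: suppose $y\in\Omega$. Since $w\in\overline{B}$ and $\Omega$ is star-shaped with respect to $w$, the entire segment $[w,y]$ is contained in $\Omega$; in particular its interior point $x$ belongs to $\Omega$, contradicting the hypothesis $x\notin\Omega$. Hence $y\notin\Omega$. There is no genuine analytic difficulty here; the only points deserving care are the two facts that make the argument run—first, that $\supp\psi\subseteq B$ forces the support point $w$ into $\overline{B}$, which is precisely what lets us apply the star-shapedness hypothesis to $w$, and second, that the \emph{strict} inequality $\xi>|x-y|$ is exactly what places $x$ in the \emph{open} segment between $y$ and $w$ rather than at an endpoint, so that $x\in[w,y]$ really does follow once the segment is known to lie in $\Omega$.
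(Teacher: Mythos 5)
Your proof is correct and follows essentially the same route as the paper's: both arguments place the support point $w=y+\xi\frac{x-y}{|x-y|}$ in $\overline{B}$ via $\psi(w)\neq 0$, observe that $x$ lies on the segment joining $y$ to $w$ because $\xi>|x-y|$, and then derive a contradiction with $x\notin\Omega$ from the star-shapedness of $\Omega$ with respect to points of $\overline{B}$. Your explicit treatment of the degenerate case $x=y$ and the explicit convex-combination parameter $t=|x-y|/\xi$ are minor clarifications beyond what the paper writes, but the substance is identical.
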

\begin{proof}
  In fact, since $\psi$ is not zero, if follows that
  \begin{equation*}
    \left|y+\xi\frac{x-y}{|x-y|}\right|\leq 1.
  \end{equation*}
  Moreover, $x=y+|x-y|\,\frac{x-y}{|x-y|}$ and hence $x$ belongs to the segment of
  endpoints $y$ and $y+\xi\frac{x-y}{|x-y|}$, for any $\xi>|x-y|$. If it were
  $y\in\Omega$, by the hypotheses on $\Omega$ the entire segment would lay in it, and that
  contradicts the assumption on $x$.
\end{proof}
An immediate consequence of the last lemma is the following result
\begin{Lemma}
\label{thm:lemma6}
The above function  $N(x,y)$ verifies
\begin{equation*}
  N(x,y)\;\equiv\;0\quad \forall\,x\notin\Omega,\text{ and }\forall\, y\in\Omega\ 
\end{equation*}
\end{Lemma}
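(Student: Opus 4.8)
The plan is to obtain the statement directly by contraposing Lemma~\ref{thm:lemma5}, since all the geometric content is already contained there. Fix $x\notin\Omega$ and $y\in\Omega$; then automatically $x\neq y$, so $|x-y|>0$ and the kernel
\begin{equation*}
N(x,y)=\frac{x-y}{|x-y|^{n}}\int_{|x-y|}^{+\infty}\psi\left(y+\xi\frac{x-y}{|x-y|}\right)\xi^{n-1}\,d\xi
\end{equation*}
is well defined. It thus suffices to prove that the scalar integral factor on the right vanishes.

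To this end I would read Lemma~\ref{thm:lemma5} in contrapositive form: for $x\notin\Omega$, if there were some $\xi>|x-y|$ with $\psi\big(y+\xi\frac{x-y}{|x-y|}\big)\neq 0$, then $y\notin\Omega$ would follow. Since here $y\in\Omega$, this cannot occur, so $\psi\big(y+\xi\frac{x-y}{|x-y|}\big)=0$ for every $\xi>|x-y|$. Hence the integrand is identically zero throughout the open interval $(|x-y|,+\infty)$ of integration, whence the integral, and so $N(x,y)$, is zero.

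The only point requiring a moment of care is the lower endpoint $\xi=|x-y|$: there the argument of $\psi$ equals $x$ (since $y+|x-y|\frac{x-y}{|x-y|}=x$), a point outside $\Omega$ about which Lemma~\ref{thm:lemma5} is silent and at which $\psi$ need not vanish. But this is a single point of Lebesgue measure zero and cannot affect the value of the integral, so no genuine obstacle arises. The result is pure bookkeeping on top of the support argument already performed in Lemma~\ref{thm:lemma5}; the only thing to guard against is conflating the strict inequality $\xi>|x-y|$ appearing there with the closed lower limit of the integral.
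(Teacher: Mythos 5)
Your proof is correct and is exactly the paper's intended argument: the paper states Lemma~\ref{thm:lemma6} as an ``immediate consequence'' of Lemma~\ref{thm:lemma5}, i.e., precisely the contrapositive reading you carry out, which forces the integrand to vanish for all $\xi>|x-y|$ and hence kills the integral. Your extra remark about the closed lower endpoint $\xi=|x-y|$ (where the argument of $\psi$ equals $x$) is a careful touch the paper leaves implicit, and it is handled correctly since a single point has measure zero.
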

We can now give the proof of Theorem~\ref{thm:theorem1}
\begin{proof}[Proof of Theorem~\ref{thm:theorem1}]
  By the above lemma, the vector $v(x)$ vanishes outside $\Omega$, and then \textit{ii)} follows from 
  \textit{i)}.
  \\
  To prove \textit{i)} and \textit{iii)}, fix $q>n$. Let $p$ be such that $1/p+1/q=1$, and
  then $p\in[1,\frac{n}{n-1}[$. Fix also any $x\in\Omega$. Since $\Omega$ is bounded, by
  Lemma~\ref{thm:lemma4} it turns out that $N(x,\cdot)\in L^{p}(\Omega)$ and by H\"{o}lder's inequality
  it follows that
\begin{equation*}
  \begin{aligned}
    |v(x)|&\leq\int_{\Omega}\left|F(y)\right|\,\left|N(x,y)\right|\,dy\leq
    c\int_{\Omega}\left|F(y)\right|\;|x-y|^{1-n}\,dy
    \\
    &\leq c \|F\|_{L^{q}(\Omega)}\left(\int_{\Omega}|x-y|^{p(1-n)}\,dy\right)^{1/p}.
  \end{aligned}
\end{equation*}
Since $\Omega\subseteq B(x,\,\diam\, \Omega$)
$$
\int_{\Omega}|x-y|^{p(1-n)}\,dy\leq \int_{B(x,\,\diam\,
  \Omega)}|x-y|^{p(1-n)}\,dy=\int_{B(0,\,\diam\, \Omega)}|z|^{p(1-n)}\,dz,
$$
and so \textit{ i)} and \textit{ii)} follow. 

Finally, by setting $g(z)=|z|^{1-n}$, we have 
$$
|v(x)|\leq c\| g\|_{L^{p}\left(B(0,\,\diam\,\Omega)\right)}\| F\|_{L^{q}(\Omega)},
$$
where the right side is independent of $x$. Since, by \textit{ ii)}, $v$ vanishes outside
$\Omega$, it follows immediately \textit{iii)}.

To get \textit{iv)}, it is enough to put $\xi=\alpha|x-y|$ in the initial formula. \\
By putting $r=\xi-|x-y|$, instead, it follows \textit{v)}. \\
Finally, by introducing $z=x-y$ in \textit{v)} it follows \textit{vi)}.
\end{proof}

\begin{remark}
  It is useful to remark explicitly that the Bogovski\u\i's ``potential'' $v$ vanishes at
  the boundary for any $F\in L^{q}(\Omega)$, without any other assumption than those made
  on $\Omega$ and $\psi$ in Theorem~\ref{thm:theorem1}. It is relevant to observe that it
  does not come by approximating $F$ by $C^\infty_0(\Omega)$ functions and by taking
  limits, but it is a property which descends directly from the formula for a large class
  of functions. In our case that is especially useful in that a given function in
  $C_D(\Omega)$ cannot be approximated in uniform norm by regular function with compact
  support, unless it vanishes at the boundary.
\end{remark}

\begin{proof}[Proof of Theorem~\ref{thm:theorem2}]
  From the formula in Theorem~\ref{thm:theorem1} \textit{vi)}, it follows that
\begin{equation*}
  \begin{aligned}
    v(x)&=\int_{x-\Omega}F(x-z)\;\frac{z}{|z|^{n}}\;\int_{0}^{\infty}\psi\left(x+r\frac{z}{|z|}\right)\;(|z|+r)^{n-1}\,dr\,dz
    \\
    &=\int_{x-\supp
      F}F(x-z)\;\frac{z}{|z|^{n}}\;\int_{0}^{1+\diam\,\Omega}\psi\left(x+r\frac{z}{|z|}\right)\;(|z|+r)^{n-1}\,dr\,dz.
  \end{aligned}
\end{equation*}
Since $\psi$ and $F$, as well as all their derivatives of any order, are bounded on
$\R^{n}$, the integrand is bounded by a multiple of the function $|z|^{1-n}$, which is integrable
on $\Omega$. By differentiating inside the integral, it follows $v\in C^{\infty}(\Omega)$.

Finally, in order to get $\supp v\subset \Omega$, let 
\begin{equation*}
E=\{z\in\Omega:\;z=(1-\lambda)z_{1} +\lambda z_{2}\quad z_{1}\in \supp F,\;
z_{2}\in\overline{B},\;\lambda\in[0,1]\},
\end{equation*}
which is a compact subset of $\Omega$ by the hypotheses, and fix any $x\in\Omega\backslash
E$. Now, it will be shown that $y+r(x-y)\notin\overline{B}$ for any $y\in \supp F$ and any
$r>1$. In fact, as it has been already seen in the proof of Lemma~\ref{thm:lemma5}, $x$ belongs to the
segment of endpoints $y$ and $y+r(x-y)$ for any $r\geq 1$. Thus, if
$y+r(x-y)\in\overline{B}$, it would follow $x\in E$, that contradicts the assumption on
$x$.
 
Hence, $\psi(y+r(x-y))\equiv 0\quad \forall x\in\Omega\backslash E$ and, by the formula
in Theorem~\ref{thm:theorem1} \textit{iv)}, it follows the theorem.
\end{proof}
\begin{proof}[Proof of Theorem~\ref{thm:theorem3}]
  Let $\{F_{k}\}\subset C^{\infty}_{0}(\Omega)$ such that $F_{ k}\rightarrow F$ in
  $L^{q}(\Omega)$ for some $q>n$, set to zero outside $\Omega$, and let $v_{k}$ and $v$
  the corresponding values obtained by the Bogovski\u\i's formula. By
  Theorem~\ref{thm:theorem1} \textit{ii)} and \textit{iii)}, it follows that $v_{k}$
  converge uniformly to $v$ in $\R^{n}$. Since, by Theorem~\ref{thm:theorem2},
  $\{v_{k}\}\subset C^{\infty}_{0}(\R^{n})$, the theorem follows immediately.
\end{proof}
\subsection{Further properties of the kernel}
In this section we prove some properties of the kernel appearing in the Bogovski\u\i's
formulas which turn to be useful in the following. Next lemma provides an identity about
the derivatives of the kernel as it appears in the second formula
(Theorem~\ref{thm:theorem1} \textit{iv)}).
\begin{Lemma} 
  \label{thm:lemma8}
  For any fixed $x,y\in \Omega,\ x\neq y$ let
  \begin{equation*}
    N_{i}(x,y):= (x_{i}-y_{i})\int_{1}^{\infty}\psi(y+\alpha(x-y))\,\alpha^{n-1}d\alpha.
  \end{equation*}
  Then, it follows that
  \begin{equation*}
    \partial_{x_{j}}N_{i}(x,y)=
    (x_{i}-y_{i})\int_{1}^{\infty}(\partial_{j}\psi)(y+\alpha(x-y))\,\alpha^{n-1}d\alpha
    -\partial_{y_{j}}N_{i}(x,y).
  \end{equation*}
\end{Lemma}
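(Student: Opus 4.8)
The plan is to differentiate under the integral sign and then exploit the elementary relation $\alpha+(1-\alpha)=1$ that comes out of the chain rule.

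First I would justify the interchange of differentiation and integration. Fix $x\neq y$; since $\psi\in C^{\infty}_{0}(\R^{n})$ with $\supp\psi\subseteq B$, the argument $y+\alpha(x-y)$ leaves $\overline{B}$ once $\alpha$ is large (exactly as in the proof of Lemma~\ref{thm:lemma4}, where the same integrand written in the variable $\xi=\alpha|x-y|$ is shown to vanish for $\xi>1+\diam\,\Omega$). Hence the defining integral is in fact taken over a bounded $\alpha$-interval, and on a neighborhood of any fixed $(x,y)$ with $x\neq y$ the integrand together with its first-order $x$- and $y$-derivatives is continuous and bounded uniformly in $\alpha$. The standard theorem on differentiation under the integral sign then applies.

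Next I would compute the two partial derivatives of the inner function by the chain rule. Since the derivative of the $k$-th component $y_{k}+\alpha(x_{k}-y_{k})$ with respect to $x_{j}$ is $\alpha\,\delta_{kj}$ and with respect to $y_{j}$ is $(1-\alpha)\,\delta_{kj}$, one gets
\begin{equation*}
\partial_{x_{j}}\psi(y+\alpha(x-y))=\alpha\,(\partial_{j}\psi)(y+\alpha(x-y)),\qquad
\partial_{y_{j}}\psi(y+\alpha(x-y))=(1-\alpha)\,(\partial_{j}\psi)(y+\alpha(x-y)).
\end{equation*}
Writing $N_{i}=(x_{i}-y_{i})\,I$ with $I=\int_{1}^{\infty}\psi(y+\alpha(x-y))\,\alpha^{n-1}d\alpha$ and applying the product rule, using $\partial_{x_{j}}(x_{i}-y_{i})=\delta_{ij}=-\,\partial_{y_{j}}(x_{i}-y_{i})$, produces expressions for $\partial_{x_{j}}N_{i}$ and $\partial_{y_{j}}N_{i}$ whose $\delta_{ij}$-terms are opposite in sign.

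The decisive step is then to add the two derivatives. The $\delta_{ij}$ contributions cancel, and the surviving integrals combine through
\begin{equation*}
\partial_{x_{j}}I+\partial_{y_{j}}I=\int_{1}^{\infty}\big[\alpha+(1-\alpha)\big](\partial_{j}\psi)(y+\alpha(x-y))\,\alpha^{n-1}d\alpha=\int_{1}^{\infty}(\partial_{j}\psi)(y+\alpha(x-y))\,\alpha^{n-1}d\alpha,
\end{equation*}
so that $\partial_{x_{j}}N_{i}+\partial_{y_{j}}N_{i}=(x_{i}-y_{i})\int_{1}^{\infty}(\partial_{j}\psi)(y+\alpha(x-y))\,\alpha^{n-1}d\alpha$, which is precisely the asserted identity once $\partial_{y_{j}}N_{i}$ is moved to the right-hand side. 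I expect the only genuine point requiring care to be the justification of differentiation under the integral sign; the rest is a short bookkeeping exercise resting on the cancellation $\alpha+(1-\alpha)=1$.
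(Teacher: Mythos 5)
Your proof is correct and follows essentially the same route as the paper: differentiation under the integral sign (justified by the compact support of $\psi$, which confines the integration to a bounded $\alpha$-interval), then the chain and product rules producing the $\delta_{ij}$-terms with opposite signs and the weights $\alpha$ and $(1-\alpha)$, whose sum collapses to $1$. The paper simply displays the two derivatives and leaves the final cancellation implicit, which you spell out explicitly; there is no substantive difference.
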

\begin{proof}
  Since $\left|y+\alpha(x-y)\right|\geq 1$ for $\alpha\geq(1+|y|)/|x-y|$ the integrand is
  bounded on a compact subset of $\R$. By differentiating under the sign of integral, it
  follows that
  \begin{equation*}
    \begin{aligned}
      \partial_{x_{j}}N_{i}(x,y)&=
      \delta_{ij}\;\int_{1}^{\infty}\psi(y+\alpha(x-y))\;\alpha^{n-1}d\alpha
      \\
      &\qquad +(x_{i}-y_{i})\int_{1}^{\infty}(\partial_{j}\psi)(y+\alpha(x-y))\alpha\;\alpha^{n-1}\,d\alpha,
    \end{aligned}
  \end{equation*}
  while
  \begin{equation*}
  \begin{aligned}
    \partial_{y_{j}}N_{i}(x,y)&=
    -\delta_{ij}\,\int_{1}^{\infty}\psi(y+\alpha(x-y))\;\alpha^{n-1}d\alpha
    \\
    &\qquad + (x_{i}-y_{i})\int_{1}^{\infty}(\partial_{j}\psi)(y+\alpha(x-y))\;(1-\alpha)\alpha^{n-1}\,d\alpha,
  \end{aligned}
\end{equation*}
and hence the lemma.
\end{proof}
The following result provides the fundamental estimates on $\partial_{x_{j}}N_{i}(x,y)$,
which allow to exploit the Calder\`{o}n-Zygmund theory to obtain the original Bogovski\u\i's
results about the $H_{0}^{1,p}(\Omega)$ regularity of $v$, and the Dini continuity
hypothesis to prove the results below.
\begin{theorem}
  \label{thm:theorem9}
  For any $i,j=1,\dots,n$ there exist functions $K_{ij}$ and $G_{ij}$ such that
$$\partial_{x_{j}}N_{i}(x,y)=K_{ij}(x,x-y)+G_{ij}(x,y)$$
where $K_{ij}(x,\cdot)$ is a Calder\`{o}n-Zygmund singular kernel and $G_{ij}$ is a weakly
singular kernel in the sense that, if one sets
\begin{equation*}
k_{ij}(x,z)\equiv |z|^{n}K_{ij}(x,z),
\end{equation*}
then, there exist constants $c=c(\psi,n)$ and $M=M(\psi,\,n,\,\diam\,\Omega)$ such
that:
\begin{enumerate}
\item $ \quad k_{ij}(x,tz)\,=\,k_{ij}(x,z)\quad\forall x\in\Omega,\;\forall z\neq 0,\; \forall t>0$;
\item $ \quad \| k_{ij}(x,z)\|_{L^{\infty}(\Omega\times S^{n-1})} $  is finite;
\item $ \quad \int_{|z|=1}k_{ij}(x,z)\,dz = 0\quad \forall x\in\Omega$;
\item $\quad |G_{ij}(x,y)|\leq c |x-y|^{1-n}$;     
\item $\quad |\partial_{x_{j}}N_{i}(x,y)| \leq M |x-y|^{-n}\quad\forall x\in\Omega\quad
  \forall y\in\R^{n}\backslash\{x\}$. 
\end{enumerate}
\end{theorem}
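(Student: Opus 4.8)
The plan is to start from the second representation of the kernel, namely formula \textit{iv)} of Theorem~\ref{thm:theorem1}. After the substitution $\alpha=s+1$ and writing $z=x-y$ it becomes $N_i(x,y)=z_i\,P(x,z)$ with $P(x,z)=\int_0^\infty\psi(x+sz)(s+1)^{n-1}\,ds$. This form is convenient because $x$ enters only through the fixed first slot of $\psi$ and through $z$, so the $x_j$-derivative splits cleanly. Differentiating at fixed $y$ (so that $\partial z_l/\partial x_j=\delta_{lj}$) and using the chain rule, I get
\[
\partial_{x_j}N_i(x,y)=\delta_{ij}\,P(x,z)+z_i\,(\partial_{x_j}P)(x,z)+z_i\,(\partial_{z_j}P)(x,z),
\]
where $\partial_{x_j}$ and $\partial_{z_j}$ denote differentiation in the first and second slot, respectively. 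That $P$ and all these derivatives are well defined, and that differentiation under the integral sign is legitimate, follows exactly as in Lemma~\ref{thm:lemma8}, since $\psi\in C_0^\infty$ confines the $s$-integration to a compact interval.

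The core of the argument is to isolate the part that is \emph{exactly} homogeneous of degree $-n$ in $z$. Expanding $(s+1)^{n-1}=\sum_{k=0}^{n-1}\binom{n-1}{k}s^{n-1-k}$ writes $P=\sum_k\binom{n-1}{k}H_k$ with $H_k(x,z)=\int_0^\infty\psi(x+sz)\,s^{n-1-k}\,ds$ homogeneous of degree $-(n-k)$ in $z$. Tracking degrees term by term, the only contributions of degree $-n$ come from $\delta_{ij}H_0$ and from $z_i\,\partial_{z_j}H_0$, whereas $z_i\,\partial_{x_j}P$ and every term with $k\ge1$ has degree $\ge 1-n$. I would therefore define the singular kernel as the degree-$(-n)$ part,
\[
K_{ij}(x,z):=\delta_{ij}H_0(x,z)+z_i\,\partial_{z_j}H_0(x,z)=\partial_{z_j}\big(z_i\,H_0(x,z)\big),\qquad H_0(x,z)=\int_0^\infty\psi(x+sz)\,s^{n-1}\,ds,
\]
and collect the rest into $G_{ij}(x,y)$. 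Property \textit{i)} is then immediate because $K_{ij}(x,\cdot)$ is homogeneous of degree $-n$, and \textit{ii)} follows by bounding $H_0$ and $\partial_{z_j}H_0$ uniformly for $x\in\Omega$, $|z|=1$ using the smoothness and compact support of $\psi$ (for $|z|=1$ and $x\in\Omega$ the $s$-support lies in $[0,1+\diam\Omega]$).

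The delicate point is the cancellation \textit{iii)}, that is $\int_{|z|=1}k_{ij}(x,z)\,dz=0$. Here the decisive observation is the one built into the definition: $K_{ij}(x,z)=\partial_{z_j}\big(z_iH_0(x,z)\big)$ is the $z_j$-derivative of $z\mapsto z_iH_0(x,z)$, which is homogeneous of degree $1-n$. I would prove the general fact that the spherical mean of $\partial_{z_j}f$ vanishes whenever $f$ is homogeneous of degree $1-n$ and smooth away from the origin. Applying the divergence theorem to the vector field $f\,e_j$ (whose divergence is $\partial_{z_j}f$) on the annulus $\{1<|z|<2\}$, the two boundary integrals over $\{|z|=1\}$ and $\{|z|=2\}$ coincide, because the homogeneity of $f$ and the surface Jacobian exactly compensate; hence the volume integral of $\partial_{z_j}f$ vanishes. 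Evaluating that same volume integral in polar coordinates, using that $\partial_{z_j}f$ is homogeneous of degree $-n$, produces $(\log 2)\int_{S^{n-1}}\partial_{z_j}f\,d\sigma$, which therefore must be zero. Applied with $f(z)=z_iH_0(x,z)$ this yields \textit{iii)}.

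Finally, \textit{iv)} and \textit{v)} are size estimates. For \textit{iv)} I would bound each remainder term of $G_{ij}$ by its homogeneity degree, which is never worse than $1-n$; since these degrees range between $1-n$ and $0$ and $|x-y|\le\diam\Omega$, every such power is dominated by a fixed power of $\diam\Omega$ times $|x-y|^{1-n}$, giving $|G_{ij}(x,y)|\le c|x-y|^{1-n}$. For \textit{v)} I would add the two bounds: $|K_{ij}(x,z)|\le C|z|^{-n}$ by exact homogeneity together with \textit{ii)}, and $|G_{ij}|\le c|x-y|^{1-n}\le c\,\diam\Omega\,|x-y|^{-n}$, so that $|\partial_{x_j}N_i|\le M|x-y|^{-n}$. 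I expect the homogeneity bookkeeping in the splitting, and above all the vanishing-mean identity \textit{iii)}, to be the real content of the proof; the remaining estimates are routine once the compact support of $\psi$ is used to keep all the $s$-integrals uniformly finite.
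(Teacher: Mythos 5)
Your proposal is correct, and your decomposition is in fact \emph{identical} to the paper's: the paper substitutes $r=(\alpha-1)|x-y|$ into formula \textit{iv)} of Theorem~\ref{thm:theorem1} and binomially expands $(r+|x-y|)^{n-1}$ and $(r+|x-y|)^{n}$, keeping as $K_{ij}$ the terms carrying no positive power of $|x-y|$; your substitution $s=\alpha-1$ and expansion of $(s+1)^{n-1}$ is the same operation in rescaled variables, and the change of variable $r=s|z|$ shows that your kernel $K_{ij}=\delta_{ij}H_{0}+z_{i}\partial_{z_{j}}H_{0}$ is literally the paper's $K_{ij}$, with \textit{i)}, \textit{ii)}, \textit{iv)}, \textit{v)} handled by the same compact-support and homogeneity bounds. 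The genuine divergence is in the cancellation \textit{iii)}. The paper reverses the polar coordinates, rewriting $\int_{|z|=1}k_{ij}(x,z)\,dz$ as the full-space integral $\int_{\R^{n}}\left[\delta_{ij}\psi(x+w)+w_{i}\partial_{j}\psi(x+w)\right]dw$, which vanishes after one integration by parts; this uses the explicit presence of $\psi$ and $\partial_{j}\psi$ in the kernel. You instead exploit the structural identity $K_{ij}(x,\cdot)=\partial_{z_{j}}\bigl(z_{i}H_{0}(x,\cdot)\bigr)$ with $z_{i}H_{0}$ homogeneous of degree $1-n$, and prove the general lemma that the spherical mean of $\partial_{z_{j}}f$ vanishes for any smooth $f$ homogeneous of degree $1-n$, by the divergence theorem on the annulus $\{1<|z|<2\}$ combined with the $\log 2$ polar computation; the argument is correct, is insensitive to the particular integrand, and exposes that the mean-zero property is forced by homogeneity of the ``antiderivative'' alone, at the modest price of an extra lemma where the paper's verification is a self-contained two-line computation. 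One caveat you share with the paper: the estimates \textit{iv)} and \textit{v)} as derived (by you and by the authors) use $|x-y|\le\diam\Omega$, hence hold for $y$ ranging in a bounded set such as $\Omega$ rather than literally for all $y\in\R^{n}\setminus\{x\}$; the paper's own proof makes the same implicit restriction (``on the bounded set $\Omega$''), and this is all that is used later.
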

\begin{proof}
  Fix $x,y\in\Omega,\; x\neq y$. By introducing $r=\alpha|x-y|-|x-y| $ it follows that
  \begin{equation*}
    \begin{aligned}
      \partial_{x_{j}}&N_{i}(x,y)=\partial_{x_{j}}\left[
        (x_{i}-y_{i})\int_{1}^{\infty}\psi(y+\alpha(x-y))\alpha^{n-1}d\alpha \right]
      \\
      &=\delta_{ij}\int_{1}^{\infty}\psi(y+\alpha(x-y))\alpha^{n-1}d\alpha+
      (x_{i}-y_{i})\int_{1}^{\infty}\partial_{j}\psi(y+\alpha(x-y))\alpha^{n}d\alpha
      \\
      &=\frac{\delta_{ij}}{|x-y|^{n}}\int_{0}^{\infty}\psi\left(x+r\frac{x-y}{|x-y|}
  \right)(r+|x-y|)^{n-1}dr\;+
  \\
  &\quad+\frac{x_{i}-y_{i}}{|x-y|^{n+1}}\int_{0}^{\infty}\partial_{j}\psi\left(x+r\frac{x-y}{|x-y|}
  \right)(r+|x-y|)^{n}\,dr.
\end{aligned}
\end{equation*}
  By using the binomial expansion inside the integrals, the last sum may be written as follows
  $$
  K_{ij}(x,x-y)+G_{ij}(x,y),
  $$
  where
  \begin{equation*}
    \begin{aligned}
      K_{ij}(x,x-y)&=\frac{\delta_{ij}}{|x-y|^{n}}\int_{0}^{\infty}\psi\left(x+r\frac{x-y}{|x-y|}
      \right)r^{n-1}\,dr
      \\
      &\qquad
      +\frac{x_{i}-y_{i}}{|x-y|^{n+1}}\int_{0}^{\infty}\partial_{j}\psi\left(x+r\frac{x-y}{|x-y|}
      \right)r^{n}\,dr,
\end{aligned}
\end{equation*}
involves only the terms not containing any strictly positive power of $|x-y|$, while all
of the others, grouped as $G$, contain at least a factor $|x-y|$ coming from the
expansion; therefore, $G_{ij}$ verifies the estimate in \textit{ iv)}.

The homogeneity property in \textit{i)} follows immediately from the previous expression
of $K_{ij}$.

Furthermore, $z\in S^{n-1}$ implies
\begin{equation*}
  \begin{aligned}
    |k_{ij}(x,z)|&\leq\left| \int_{0}^{\infty}\psi\left(x+r\,z
      \right)r^{n-1}dr\right|+ \left| \int_{0}^{\infty}\partial_{j}\psi\left(x+r\,z
      \right)r^{n}dr\right|
    \\
    &
    \leq\|\psi\|_{L^{\infty}(\R^{n})}(\diam\,\Omega)^{n}+\| \partial_{j}\psi\|_{L^{\infty}(\R^{n})}
    (\diam\,\Omega)^{n+1},
  \end{aligned}
\end{equation*}
and \textit{ii)} follows. 

Finally,
\begin{equation*}
  \begin{aligned}
    \int_{|z|=1}&k_{ij}(x,z)\,dz=
    \\
    & = \delta_{ij}\int_{|z|=1}\int_{0}^{\infty}\psi\left(x+rz \right)r^{n-1}dr+
    \int_{|z|=1}\;z_{i}\int_{0}^{\infty}\partial_{j}\psi\left(x+rz \right)r^{n}\,dr
    \\
    &=\int_{\R^{n}}\left[ \delta_{ij}\psi(x+y)+y_{i}\partial_{j}\psi(x+y)\right]\,dy.
  \end{aligned}
\end{equation*}
After an integration by parts, the last integral turns out to be zero, and \textit{
  iii)} is proved.

Finally, \textit{ii)} and \textit{iv)} imply immediately \textit{v)}, on the bounded set
  $\Omega$.
\end{proof}
\subsection{The approximating functions for the solution}
The main tool we applied in this paper is an old aged argument exploited by Korn (see,
e.g., Gilbarg and Trudinger~\cite[Ch. 4]{GT1998}) in the study of the existence of
classical solutions of the Poisson equation, based on a suitable ''cutoff'' of the
singularity present in the second derivatives of the Newtonian potential, that provides a
way to approximate the solution $v$ by regular functions.

To this end we introduce the function $\eta\in C^{\infty}(\R^{+})$ such that
$\eta(t)\equiv 0$ on $[0,1]$, $\eta(t)\equiv 1$ if $t\geq 2$, and $|\eta'(t)|\leq 2\quad
\forall t \in \R^{+}$.
\begin{definition}
For any $q>n$, $F\in L^{q}(\Omega)$ and $\epsilon>0$ let us set
\begin{equation*}
  \begin{aligned}
    v^{\epsilon}(x)&=\int_{\Omega}F(y)\;(x-y)\;\left[
      \int_{1}^{\infty}\psi(y+\alpha(x-y))\;\alpha^{n-1}d\alpha\right]\;\eta\left(\frac{|x-y|}{\epsilon}\right)\,dy
\\
&=\int_{\Omega}F(y)\;N(x,y)\;\eta\left(\frac{|x-y|}{\epsilon}\right)\,dy.
\end{aligned}
\end{equation*}
\end{definition}
We remark that if $|x-y|<\epsilon$ the integrand is zero and therefore the integrand
belongs to $ C^{\infty}(\R^{n})$ and it is bounded by Lemma~\ref{thm:lemma4}, while if\\
$|x-y|\geq\epsilon$ the set of $\alpha$ such that $|y+\alpha(x-y)|\leq 1$, where $\psi$
could be not null, is bounded as well. It follows immediately that $v^{\epsilon}$ is
well-defined and belongs to $C^{\infty}(\R^{n})$.  Moreover, under the same hypotheses of
Theorem~\ref{thm:theorem2} and following the same argument, one proves that it belongs
actually to $C^{\infty}_{0}(\Omega)$.
\begin{theorem}
  Assume the same hypotheses of Theorem~\ref{thm:theorem1}, and let $\eta$ and
  $v^{\epsilon}$ as above.  Then:
\begin{enumerate}
\item If, in addition, $F\in L^{\infty}(\Omega)$, then
$$
v^{\epsilon} \rightarrow v  \;\text{uniformly for }x\in\R^{n};
$$
\item If, moreover, $\int_{\R^{n}}\psi(x)\,dx=1$ and $F\in C^{0}(\overline{\Omega})$, then
\begin{equation*}
\lim_{\epsilon\to 0}\,\dive v^{\epsilon}(x)=-\psi(x)\int_{\Omega} F(y)\,dy+F(x) \qquad\forall\, x\in\Omega.
\end{equation*}
\end{enumerate}
\end{theorem}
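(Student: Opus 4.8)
The plan is to prove the two convergence statements separately, in both cases by comparing $v^\epsilon$ (or its divergence) to the corresponding limit and estimating the difference on the region $\{|x-y|<2\epsilon\}$ where the cutoff $\eta(|x-y|/\epsilon)$ differs from $1$. For part \textit{i)}, I would write, for each $x$,
\begin{equation*}
  v(x)-v^\epsilon(x)=\int_\Omega F(y)\,N(x,y)\left[1-\eta\!\left(\tfrac{|x-y|}{\epsilon}\right)\right]dy,
\end{equation*}
and observe that the bracket is supported in $\{|x-y|<2\epsilon\}$ and bounded by $1$. Using $F\in L^\infty(\Omega)$ together with the kernel estimate $|N(x,y)|\le c|x-y|^{1-n}$ from Lemma~\ref{thm:lemma4}, the integral is dominated by
\begin{equation*}
  c\,\|F\|_{L^\infty(\Omega)}\int_{\{|x-y|<2\epsilon\}}|x-y|^{1-n}\,dy
  \le c'\,\|F\|_{L^\infty(\Omega)}\,\epsilon,
\end{equation*}
the last step by passing to polar coordinates (the integrand is weakly singular, so the integral over a ball of radius $2\epsilon$ is $O(\epsilon)$). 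This bound is uniform in $x\in\R^n$, giving uniform convergence; this part is routine.

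For part \textit{ii)} I would compute $\dive v^\epsilon$ by differentiating under the integral sign, which is justified since $v^\epsilon\in C^\infty(\R^n)$. Writing the integrand via the kernel $N_i(x,y)$ of Theorem~\ref{thm:theorem1}~\textit{iv)} and applying Lemma~\ref{thm:lemma8}, I would replace each $\partial_{x_j}$ acting on $N_i$ by the explicit expression involving $(\partial_j\psi)$ plus $-\partial_{y_j}N_i$. The plan is to move the $\partial_{y_j}$-terms onto $F$ and the cutoff via integration by parts in $y$; because $F$ is merely continuous, not differentiable, the key is that the $y$-derivative falling on $N_i$ can be integrated by parts so that no derivative of $F$ is needed — the boundary terms vanish since the cutoff kills a neighborhood of the diagonal and $N$ has compact support in $y$ by Lemma~\ref{thm:lemma6}. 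After this manipulation, the limit $\epsilon\to 0$ should separate into a ``principal value'' contribution from the singular kernel and a boundary/delta contribution concentrated at $y=x$.

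The main obstacle — and the heart of the computation — is identifying the singular limit. The term producing $F(x)$ and the term producing $-\psi(x)\int_\Omega F$ should both arise from the concentration of the kernel near the diagonal as $\epsilon\to0$. Concretely, I expect the combination
\begin{equation*}
  \int_\Omega F(y)\sum_i \partial_{x_i}\!\Big[N_i(x,y)\,\eta\big(\tfrac{|x-y|}{\epsilon}\big)\Big]\,dy
\end{equation*}
to split, using the divergence structure $\sum_i\partial_{x_i}N_i$ computed via Lemma~\ref{thm:lemma8}, into a piece whose kernel integrates to a multiple of $\psi$ over the sphere (cf.\ the cancellation property \textit{iii)} of Theorem~\ref{thm:theorem9}) and a piece converging to $F(x)$ by an approximate-identity argument using continuity of $F$ and the normalization $\int_{\R^n}\psi=1$. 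The delicate point is that $v$ solves $\dive v=F-\psi\int_\Omega F$ exactly for smooth data (this is the defining property of the Bogovski\u\i{} solution), so the target identity is precisely the pointwise realization of that relation; I would verify the two contributions match by computing the $\epsilon\to0$ limit of the sphere integral against $\psi$ and invoking continuity of $F$ at $x$ to pass the approximate identity to the limit. The care needed is purely in tracking the weakly singular remainder $G_{ij}$ (whose contribution vanishes in the limit by the $|x-y|^{1-n}$ bound of Theorem~\ref{thm:theorem9}~\textit{iv)}, as in part \textit{i)}) versus the genuinely singular Calder\'on–Zygmund part $K_{ij}$ that generates the limit.
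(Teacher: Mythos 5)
Your part \textit{i)} is correct and is essentially the paper's own argument: the factor $1-\eta(|x-y|/\epsilon)$ is supported in $\{|x-y|<2\epsilon\}$, Lemma~\ref{thm:lemma4} bounds the kernel by $c\,|x-y|^{1-n}$, and the resulting bound $c'\|F\|_{L^{\infty}(\Omega)}\,\epsilon$ is uniform in $x\in\R^{n}$.

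Part \textit{ii)}, however, contains a genuine gap, in fact two. First, the step in which you ``move the $\partial_{y_j}$-terms onto $F$ and the cutoff via integration by parts in $y$ \dots\ so that no derivative of $F$ is needed'' cannot be performed: integration by parts in $y$ transfers the derivative exactly onto the factor $F(y)$, and in part \textit{ii)} $F$ is only continuous. The paper uses this kind of manipulation only later, in the proof of Theorem~\ref{thm:theorem13}, and only after splitting $F(y)=[F(y)-F(x)]+F(x)$ and integrating by parts the constant-$F(x)$ piece alone; the remaining piece, $\int[F(y)-F(x)]\,\partial_{x_j}N_i(x,y)\,dy$, is absolutely convergent only thanks to the Dini hypothesis, because $|\partial_{x_j}N_i(x,y)|\le M|x-y|^{-n}$ by Theorem~\ref{thm:theorem9} \textit{v)}. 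For $F\in C^{0}(\overline{\Omega})$ this splitting is unavailable, and indeed the individual derivatives $\partial_{x_j}v_i^{\epsilon}$ need not converge at all --- that is precisely why Theorem~\ref{thm:theorem13} assumes Dini continuity. Second, you have the roles of $K_{ij}$ and $G_{ij}$ reversed. For the divergence only the trace $i=j$ matters, and the trace of the Calder\'on--Zygmund part cancels \emph{identically}: for $|z|=1$ one has $\sum_i z_i\partial_i\psi(x+rz)=\frac{d}{dr}\psi(x+rz)$, so a radial integration by parts in $r$ gives $\sum_i k_{ii}(x,z)\equiv0$. (This pointwise vanishing of the trace is a different cancellation from the spherical mean-zero property \textit{iii)} of Theorem~\ref{thm:theorem9}, which is what you invoke.) Hence no principal value is ever taken: one finds $\sum_i\partial_{x_i}N_i(x,y)=-\psi(x)$ for every $y\neq x$, so it is the weakly singular remainder --- not $K_{ij}$ --- whose contribution survives and produces $-\psi(x)\int_\Omega F$, while the term $F(x)$ comes from the piece where $\partial_{x_i}$ falls on the cutoff $\eta(|x-y|/\epsilon)$, an approximate identity supported on the annulus $\{\epsilon<|x-y|<2\epsilon\}$.

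The identity $\sum_i\partial_{x_i}N_i(x,y)\equiv-\psi(x)$ is the missing key idea, and the paper obtains it in the equivalent form of a total derivative: differentiating under the integral sign and summing over $i=j$, the terms not containing $\eta'$ combine into
\begin{equation*}
  \int_\Omega F(y)\,\eta\left(\frac{|x-y|}{\epsilon}\right)\int_1^\infty\frac{d}{d\alpha}\left[\psi(y+\alpha(x-y))\,\alpha^{n}\right]d\alpha\,dy
  =-\psi(x)\int_\Omega F(y)\,\eta\left(\frac{|x-y|}{\epsilon}\right)dy,
\end{equation*}
with no singular-integral theory and no derivative ever landing on $F$; the $\eta'$ term is then shown to tend to $F(x)$ using only the uniform continuity of $F$ and the normalization $\int_{\R^{n}}\psi=1$. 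Without this trace cancellation (or the equivalent total-derivative identity), your scheme cannot close under the hypothesis $F\in C^{0}(\overline{\Omega})$.
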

\begin{proof}
  To prove \textit{i)}, let us fix any $x\in\Omega$. Remark that, by the
  Lemma~\ref{thm:lemma4}, for any $\epsilon<\textrm{dist}(x,\partial \Omega)$
\begin{equation*}
  \begin{aligned}
    |v^{\epsilon}_{i}(x)-v_{i}(x)|&\leq
    \int_{\Omega}\left|\;F(y)N_{i}(x,y)\right|\;\left|\eta\left(\frac{|x-y|}{\epsilon}\right)\;-1\;\right|\,dy
    \\
    &\leq c \| F\|_{L^{\infty}(\Omega)}\int_{|x-y|<\epsilon}\frac{1}{|x-y|^{n-1}}\,dy
    \leq c \| F\||_{L^{\infty}(\Omega)}\int_{|z|<\epsilon}|z|^{1-n}\,dz,
  \end{aligned}
\end{equation*}
and by the absolute continuity of the last integral, it follows than it tends to zero
independently of $x\in\Omega$. To complete the proof of
\textit{i)}  it is enough to remark that by Lemma~\ref{thm:lemma6} $v^{\epsilon}(x)=v(x)= 0\quad \forall x\notin\Omega$. \\

To prove \textit {ii)}, by differentiating $v^{\epsilon}_{i}$ at any $x\in\Omega$ it
follows that
\begin{equation*}
  \begin{aligned}
    \partial_{x_{j}} &v^{\epsilon}_{i}(x)=\int_{\Omega}F(y)\delta_{ij}\,\left[
      \int_{1}^{\infty}\psi(y+\alpha(x-y))\;\alpha^{n-1}d\alpha\right]\,dy
    \\
    &+\int_{\Omega}F(y)(x_{i}-y_{i})\;\left[\int_{1}^{\infty}
      (\partial_{j}\psi(y+\alpha(x-y))\;\alpha^{n}d\alpha\right]\,\eta\left(\frac{|x-y|}{\epsilon}\right)\,dy
    \\
    &+\int_{\Omega}F(y)(x_{i}-y_{i})\,\left[\int_{1}^{\infty}\psi(y+\alpha(x-y))\;\alpha^{n-1}d\alpha\right]\,\eta'\left(\frac{|x-y|}{\epsilon}\right)\frac{x_{j}-y_{j}}{|x-y|}\,\epsilon^{-1}\,dy
    \\
    &=
    \int_{\Omega}F(y)\delta_{ij}\;\left[\int_{1}^{\infty}\psi(y+\alpha(x-y))\;\alpha^{n-1}d\alpha\right]\;\eta\left(\frac{|x-y|}{\epsilon}\right)\,dy
    \\
    &+\int_{\Omega}F(y)(x_{i}-y_{i})\,\left[\int_{1}^{\infty}
      (\partial_{j}\psi(y+\alpha(x-y))\;\alpha^{n}d\alpha\right]\,\eta\left(\frac{|x-y|}{\epsilon}\right)\,dy
    \\
    &+\int_{\Omega}F(y)\frac{(x_{i}-y_{i})(x_{j}-y_{j})}{|x-y|}\;\left[\int_{1}^{\infty}\psi(y+\alpha(x-y))\;\alpha^{n-1}d\alpha\right]\;\eta'\left(\frac{|x-y|}{\epsilon}\right)\;\epsilon^{-1}dy,
  \end{aligned}
\end{equation*}
and therefore
\begin{equation*}
  \begin{aligned}
    \dive
    v^{\epsilon}(x)&=n\;\int_{\Omega}F(y)\;\left[\int_{1}^{\infty}\psi(y+\alpha(x-y))\;\alpha^{n-1}d\alpha\right]\;\eta\left(\frac{|x-y|}{\epsilon}\right)\,dy
    \\
    &+\sum_{i=1}^{n}\int_{\Omega}F(y)(x_{i}-y_{i})\;\left[\int_{1}^{\infty}
      (\partial_{j}\psi(y+\alpha(x-y))\;\alpha^{n}d\alpha\right]\;\eta\left(\frac{|x-y|}{\epsilon}\right)\,dy
    \\
    &+\int_{\Omega}F(y)|x-y|\;\left[\int_{1}^{\infty}\psi(y+\alpha(x-y))\;\alpha^{n-1}d\alpha\right]\;\eta'\left(\frac{|x-y|}{\epsilon}\right)\;\epsilon^{-1}dy
    \\
    &=:A+B+C.
\end{aligned}
\end{equation*}
Now,
\begin{equation*}
  \begin{aligned}
    A+B&= 
    \int_{\Omega}F(y)\eta\left(\frac{|x-y|}{\epsilon}\right)\quad\times
\\
&\times\;\int_{1}^{\infty}\left[\psi(y+\alpha(x-y))\,n\alpha^{n-1}+\alpha^{n}\sum_{i=1}^{n}\partial_{x_{i}}\psi(y+\alpha(x-y))(x_{i}-y_{i})\right]
\,d\alpha\,dy
\\
&=\quad \int_{\Omega}F(y)\;\eta\left(\frac{|x-y|}{\epsilon}\right) \int_{1}^{\infty}
\frac{d}{d\alpha} \left[\psi(y+\alpha(x-y))\alpha^{n}\right]\,d\alpha\,dy
\\
&=\quad -\psi(x)\;\int_{\Omega}F(y)\;\eta\left(\frac{|x-y|}{\epsilon}\right)\,dy\quad
\longrightarrow\quad -\psi(x)\int_{\Omega}F(y)\,dy,
\end{aligned}
\end{equation*}
as $\epsilon$ tends to zero.  Moreover, by setting into $C$ first $\alpha=\xi/|x-y|$, next
$\xi=r+|x-y|$, and finally $z=\epsilon^{-1}(x-y)$ one obtains
\begin{equation*}
  \begin{aligned}
    C&=
    \int_{\Omega}\frac{F(y)}{|x-y|^{n-1}}\;\eta'\left(\frac{|x-y|}{\epsilon}\right)\;\epsilon^{-1}\;\left[\int_{|x-y|}^{\infty}\psi\left(y+\xi\frac{x-y}{|x-y|}\right)\xi^{n-1}\,d\xi\right]\,dy
\\
&
=
\int_{\epsilon<|x-y|<2\epsilon}\frac{F(y)}{|x-y|^{n-1}}\;\eta'\left(\frac{|x-y|}{\epsilon}\right)\;\epsilon^{-1}\;\times
\\
&\qquad \qquad\times\;\left[\int_{0}^{\infty}\psi\left(x+r\frac{x-y}{|x-y|}\right)
  (r+|x-y|)^{n-1}\,dr\right]\,dy 
\\
&= \int_{1<|z|<2}\frac{F(x-\epsilon z)}{|z|^{n-1}}\;\eta'(|z|)\; \left[
  \int_{0}^{\infty}\psi\left(x+r\frac{z}{|z|}\right) (r+\epsilon|z|)^{n-1}\,dr\right]\,dz.
\end{aligned}
\end{equation*}
Now we claim that, as $\epsilon$ goes to $0$, the last term tends to
\begin{equation*}
F(x)\int_{1<|z|<2}\frac{1}{|z|^{n-1}}\eta'(|z|)\;\left[
  \int_{0}^{\infty}\psi\left(x+r\frac{z}{|z|}\right) r^{n-1}\,dr\right]\,dz,
\quad\quad\quad (\bigstar)
\end{equation*}
In fact, since $\psi\left(x+r\frac{z}{|z|}\right)$ vanishes when $r>1+\diam\,\Omega$, the
inner integral is bounded by $\max |\psi|\;(1+\diam\,\Omega)^{n}=:M$. Hence, we get
\begin{equation*}
  \begin{aligned}
    &\left| \int_{1<|z|<2}\frac{F(x-\epsilon \,z)-F(x)}{|z|^{n-1}}\;\eta'(|z|)\; \left[
        \int_{0}^{\infty}\psi\left(x+r\frac{z}{|z|}\right)
        (r+\epsilon|z|)^{n-1}\,dr\right]\,dz\right|
    \\
    &\quad \leq 2M \int_{1<|z|<2}\frac{|F(x-\epsilon \,z)-F(x)|}{|z|^{n-1}}\,dz\leq 2M
    \int_{1<|z|<2}\frac{\omega(\epsilon\,|z|)}{|z|^{n-1}}\,dz,
\end{aligned}
\end{equation*}
where $\omega$ is the modulus of continuity of $F$. By the uniform continuity of $F$ on
$\Omega$ and the Lebesgue theorem on dominated convergence, the last integral vanishes as
$\epsilon$ goes to zero and therefore the claim is proved.

Finally, by introducing in the limit $(\bigstar)$ the radial and angular coordinates $
\rho=|z|$ and $u=z/|z|$, one gets
\begin{equation*}
  \int_{1}^{2}\eta'(\rho)\,d\rho\int_{S^{n-1}}\int_{0}^{\infty}\psi(x+ru)\,r^{n-1}\,dr\,du=\left(\eta(2)-\eta(1)\right)
  \int_{\R^{n}}\psi(w)\,dw=1.
\end{equation*}
Therefore, $C\longrightarrow F(x)$ as $\epsilon$ tends to $0$ and the lemma follows.
\end{proof}
%
%
%
%
The following theorem, an immediate corollary of the previous one, is the cornerstone of
the resolution of the divergence problem.
\begin{theorem}
  \label{thm:theorem11}
  Assume the same hypotheses of the previous theorem.  Moreover, let
  $\int_{\R^{n}}\psi(y)\,dy=1$, $F\in C^{0}(\overline{\Omega})$ and $\int_{\Omega}F(x)\,dx=0$.
  Then, as $\epsilon$ goes to zero,
\begin{equation*}
\dive  v^{\epsilon}(x)\rightarrow F(x)\qquad\forall  x\in\Omega.
\end{equation*}
\end{theorem}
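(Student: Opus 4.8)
The plan is simply to invoke part \textit{ii)} of the immediately preceding theorem. Under the present hypotheses one has both $\int_{\R^{n}}\psi(y)\,dy=1$ and $F\in C^{0}(\overline{\Omega})$, so that theorem already supplies the pointwise limit
$$
\lim_{\epsilon\to 0}\dive v^{\epsilon}(x)=-\psi(x)\int_{\Omega}F(y)\,dy+F(x)\qquad\forall\,x\in\Omega.
$$
The only genuinely new ingredient in the statement to be proved is the compatibility condition $\int_{\Omega}F(x)\,dx=0$. Feeding this into the displayed identity makes the first summand $-\psi(x)\int_{\Omega}F(y)\,dy$ vanish identically (for every $x$, since the integral is a single scalar), and we are left with $\lim_{\epsilon\to 0}\dive v^{\epsilon}(x)=F(x)$ for all $x\in\Omega$, which is exactly the claim.

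In other words, the entire analytic content has already been discharged in the proof of the previous theorem: the differentiation of $v^{\epsilon}_i$ under the integral, the telescoping of the terms $A+B$ into the integral of $\frac{d}{d\alpha}\big[\psi(y+\alpha(x-y))\alpha^{n}\big]$, and the mollifier-type convergence $C\to F(x)$ driven by the uniform continuity of $F$ and dominated convergence. Consequently there is no real obstacle here; this is a one-line corollary. The only point I would check carefully is the bookkeeping, namely that the surviving constant term is precisely $-\psi(x)\int_{\Omega}F$, so that the hypothesis $\int_{\Omega}F=0$ annihilates it cleanly and no stray factor from the normalization $\int_{\R^{n}}\psi=1$ or from the binomial rearrangements remains. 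Once that matching is confirmed, the theorem follows at once, and it is this clean cancellation that makes the result the cornerstone for constructing, via the uniform convergence in part \textit{i)}, a classical solution of the divergence problem.
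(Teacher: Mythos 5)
Your proposal is correct and matches the paper exactly: the paper gives no separate proof, describing Theorem~\ref{thm:theorem11} as ``an immediate corollary of the previous one,'' which is precisely your argument of citing part \textit{ii)} of the preceding theorem and letting the hypothesis $\int_{\Omega}F(x)\,dx=0$ annihilate the term $-\psi(x)\int_{\Omega}F(y)\,dy$.
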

%
The next lemma, an immediate consequence of Lemma~\ref{thm:lemma8} and the opposite sign
in the derivatives of $\eta(|x-y|/\epsilon)$, will be useful in proving the subsequent
representation formula. 
\begin{Lemma}
  \label{thm:lemma12}
\begin{equation*}
  \begin{aligned}
    &\partial_{x_{j}}\left[N_{i}(x,y)\,\eta\left(\frac{|x-y|}{\epsilon}\right)\right]=- \partial_{y_{j}}\left[N_{i}(x,y)\,\eta\left(\frac{|x-y|}{\epsilon}\right)\right]
    +
    \\
    &\qquad \qquad
    +\eta\left(\frac{|x-y|}{\epsilon}\right)(x_{i}-y_{i})\int_{1}^{\infty}\partial_{j}\psi(y+\alpha(x-y))\;\alpha^{n-1}d\alpha.
\end{aligned}
\end{equation*}
\end{Lemma}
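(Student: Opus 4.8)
The plan is to differentiate the product $N_{i}(x,y)\,\eta\!\left(\frac{|x-y|}{\epsilon}\right)$ by the Leibniz rule, once in the variable $x_{j}$ and once in the variable $y_{j}$, and then to add the two resulting identities so that the terms carrying the derivative of the cutoff cancel against one another. The decisive observation is that the cutoff depends on $x$ and $y$ only through the difference $x-y$. Consequently, the chain rule gives
\begin{equation*}
\partial_{x_{j}}\,\eta\!\left(\frac{|x-y|}{\epsilon}\right)
= \eta'\!\left(\frac{|x-y|}{\epsilon}\right)\frac{1}{\epsilon}\,\frac{x_{j}-y_{j}}{|x-y|}
= -\,\partial_{y_{j}}\,\eta\!\left(\frac{|x-y|}{\epsilon}\right),
\end{equation*}
which is exactly the ``opposite sign'' mentioned in the statement.

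Next I would apply the Leibniz rule in each variable, obtaining
\begin{equation*}
\partial_{x_{j}}\!\left[N_{i}\,\eta\right]
= (\partial_{x_{j}}N_{i})\,\eta + N_{i}\,\partial_{x_{j}}\eta,
\qquad
\partial_{y_{j}}\!\left[N_{i}\,\eta\right]
= (\partial_{y_{j}}N_{i})\,\eta + N_{i}\,\partial_{y_{j}}\eta,
\end{equation*}
where for brevity $\eta$ stands for $\eta(|x-y|/\epsilon)$. Adding these two expressions and invoking the sign relation $\partial_{x_{j}}\eta = -\partial_{y_{j}}\eta$, the two terms proportional to $N_{i}$ cancel and one is left with
\begin{equation*}
\partial_{x_{j}}\!\left[N_{i}\,\eta\right]
+ \partial_{y_{j}}\!\left[N_{i}\,\eta\right]
= \bigl(\partial_{x_{j}}N_{i} + \partial_{y_{j}}N_{i}\bigr)\,\eta .
\end{equation*}

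Finally I would substitute Lemma~\ref{thm:lemma8}, which identifies the combination $\partial_{x_{j}}N_{i}+\partial_{y_{j}}N_{i}$ with the single integral term $(x_{i}-y_{i})\int_{1}^{\infty}\partial_{j}\psi(y+\alpha(x-y))\,\alpha^{n-1}\,d\alpha$, and then move $\partial_{y_{j}}[N_{i}\,\eta]$ to the right-hand side. This produces precisely the claimed identity. I do not expect any genuine obstacle here: the content is entirely contained in the sign relation for the derivatives of the cutoff together with the algebraic cancellation, and the legitimacy of differentiating under the integral sign defining $N_{i}$ has already been established in the proof of Lemma~\ref{thm:lemma8}.
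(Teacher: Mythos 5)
Your proof is correct and is exactly the argument the paper intends: the paper presents Lemma~\ref{thm:lemma12} as ``an immediate consequence of Lemma~\ref{thm:lemma8} and the opposite sign in the derivatives of $\eta(|x-y|/\epsilon)$,'' and your Leibniz-plus-cancellation computation is precisely that remark spelled out in detail.
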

%
%
As usual in potential theory, getting a representation formula for the derivatives of the
function $v_{i}$ is a crucial goal. We will obtain it through a limit of the derivatives
of its ``regular approximation'' $v^{\epsilon}$. Thus, let us start by differentiating the
formula
\begin{equation*}
  v_{i}^{\epsilon}(x)= \int_{\Omega}
  F(y)N_{i}(x,y)\,\eta\left(\frac{|x-y|}{\epsilon}\right)\,dy= \int_{B_{R}}
  F(y)N_{i}(x,y)\,\eta\left(\frac{|x-y|}{\epsilon}\right)\,dy, 
\end{equation*}
where $F\in L^{\infty}(\Omega)$ is extended by zero outside $\Omega$ and $B_{R}$ is a ball
of radius large enough such that $\Omega \subset\subset B_{R}$.  By the previous lemma and
Bogovski\u\i's formula in Theorem~\ref{thm:theorem1} \textit{iv)} it follows that
\begin{equation*}
  \begin{aligned}
    \partial_{x_{j}}v_{i}^{\epsilon}(x)&=\int_{B_{R}}
    F(y)\;\partial_{x_{j}}\left[N_{i}(x,y)\eta\left(\frac{|x-y|}{\epsilon}\right)\right]\,dy
    \\
    &=\int_{B_{R}}
    \left[F(y)-F(x)\right]\partial_{x_{j}}\left[N_{i}(x,y)\eta\left(\frac{|x-y|}{\epsilon}\right)\right]\,dy
    \\
    &\qquad+F(x)\int_{B_{R}}\partial_{x_{j}}\left[N_{i}(x,y)\eta\left(\frac{|x-y|}{\epsilon}\right)\right]\,dy
    \\
    &=\int_{B_{R}}
    \left[F(y)-F(x)\right]\partial_{x_{j}}\left[N_{i}(x,y)\eta\left(\frac{|x-y|}{\epsilon}\right)\right]\,dy
    \\
    &\qquad+F(x)\;\int_{B_{R}}\eta\left(\frac{|x-y|}{\epsilon}\right)(x_{i}-y_{i})\;\int_{1}^{\infty}\partial_{j}\psi(y+\alpha(x-y))\;\alpha^{n-1}d\alpha\,dy 
    \\
    &\qquad
    -F(x)\int_{B_{R}}\partial_{y_{j}}\left[N_{i}(x,y)\eta\left(\frac{|x-y|}{\epsilon}\right)\right]\,dy.
\end{aligned}
\end{equation*}
Since $\eta\left(\frac{|x-y|}{\epsilon}\right)=1$ if $\epsilon<\textrm{dist}(\partial
B_{R},\overline{\Omega})$, by the Gauss-Green formula the last integral is equal to
$\int_{\partial B_{R}}N_{i}(x,y)\nu_{j}(y)d\sigma_{y}$.

\medskip

The previous computation suggests to put forward a conjecture about the limit as
$\epsilon$ goes to zero, which will be proved in the next theorem, that is the main result
of the paper.
%
%
\begin{theorem} 
  \label{thm:theorem13}
  Assume all the hypotheses of the Theorem~\ref{thm:theorem1}, and let $\eta$ and
  $v^{\epsilon}$ as above.  Furthermore, let $\psi$ be such that $\int_{\R^{n}}\psi=1$,
  $F\in C_{D}(\Omega)$, and
\begin{equation*}
  \begin{aligned}
    u_{j}^{i}(x)=&\int_{B_{R}}
    \left[F(y)-F(x)\right]\partial_{x_{j}}N_{i}(x,y)\,dy
    \\
    &+F(x)\int_{B_{R}}(x_{i}-y_{i})\int_{1}^{\infty}\partial_{j}\psi(y+\alpha(x-y))\;\alpha^{n-1}d\alpha\,dy\;-
    \\
    &-F(x)\;\int_{\partial B_{R}}N_{i}(x,y)\nu_{j}(y)d\sigma_{y}.
  \end{aligned}
\end{equation*}
Then:
\begin{enumerate}
\item\quad	$u_{j}^{i}(x)$ is well-defined for any $x\in\Omega$;
\item\quad $\partial_{x_{j}}v^{\epsilon}_{i} $ converges uniformly to $u_{j}^{i} $ on any
  $\Omega'\subset\subset\Omega$;
\item\quad $\partial_{x_{j}}v\;\equiv\;u_{j}^{i} \quad on\; \Omega$;
\item\quad   $v\in C^{1}(\Omega)$.
\end{enumerate}
\end{theorem}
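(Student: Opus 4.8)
The plan is to establish the four claims in order, reducing everything to the pointwise kernel bounds already available together with the single decisive property of $F$, namely its Dini continuity. Throughout I write $\omega(\rho)=\omega(F,\rho)$ for the modulus of continuity.

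For \textit{i)} the only term of $u_j^i$ whose integral could diverge is the first. By Theorem~\ref{thm:theorem9}~\textit{v)}, $|\partial_{x_j}N_i(x,y)|\le M|x-y|^{-n}$, while $|F(y)-F(x)|\le\omega(|x-y|)$; hence the integrand is dominated by $M\,\omega(|x-y|)\,|x-y|^{-n}$. For $x\in\Omega$ fixed, passing to polar coordinates on a small ball $B(x,\delta)\subset\Omega$ reduces this to $M|S^{n-1}|\int_0^{\delta}\omega(\rho)\rho^{-1}\,d\rho$, finite precisely because $F\in C_D(\Omega)$; away from $x$ the integrand is bounded. The second term of $u_j^i$ is only weakly singular — the substitution $\xi=\alpha|x-y|$ rewrites its inner factor as $\frac{x_i-y_i}{|x-y|^{n}}\int_{|x-y|}^{\infty}\partial_j\psi(\cdots)\xi^{n-1}\,d\xi$, which is $\le c|x-y|^{1-n}$, hence integrable, and the boundary term integrates the continuous $N_i(x,\cdot)$ over $\partial B_R$. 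This gives \textit{i)} for every $x\in\Omega$.

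For \textit{ii)}, comparing the representation of $\partial_{x_j}v^\epsilon_i$ derived just before the statement with $u_j^i$, the difference is
\begin{equation*}
\partial_{x_j}v^\epsilon_i(x)-u_j^i(x)=\int_{B_R}[F(y)-F(x)]\bigl\{\partial_{x_j}[N_i\eta]-\partial_{x_j}N_i\bigr\}\,dy+F(x)\int_{B_R}(\eta-1)(x_i-y_i)\!\int_1^\infty\!\partial_j\psi\,\alpha^{n-1}d\alpha\,dy,
\end{equation*}
where $\eta=\eta(|x-y|/\epsilon)$. Since $\eta\equiv1$ and $\eta'\equiv0$ for $|x-y|\ge2\epsilon$, every term is supported in $\{|x-y|<2\epsilon\}$. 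Writing $\partial_{x_j}[N_i\eta]-\partial_{x_j}N_i=(\eta-1)\partial_{x_j}N_i+N_i\,\partial_{x_j}\eta$, the first piece is controlled, as in \textit{i)}, by $M|S^{n-1}|\int_0^{2\epsilon}\omega(\rho)\rho^{-1}\,d\rho$, the \emph{tail} of the convergent Dini integral, which tends to $0$. The second piece carries the dangerous factor $\partial_{x_j}\eta=\eta'(|x-y|/\epsilon)\frac{x_j-y_j}{|x-y|}\epsilon^{-1}$; but $\eta'$ is supported in the annulus $\epsilon<|x-y|<2\epsilon$, where $|N_i|\le c|x-y|^{1-n}$ by Lemma~\ref{thm:lemma4} and $|F(y)-F(x)|\le\omega(2\epsilon)$, so this contribution is at most $2c\,\omega(2\epsilon)\,\epsilon^{-1}\int_{\epsilon<|x-y|<2\epsilon}|x-y|^{1-n}\,dy=2c|S^{n-1}|\,\omega(2\epsilon)\to0$ by the uniform continuity of $F$. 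The last term of the difference is bounded by $c\|F\|_{L^\infty}\int_{|x-y|<2\epsilon}|x-y|^{1-n}\,dy=O(\epsilon)$. All three bounds depend only on $\epsilon$ and on the modulus of $F$ near the singular ball, so — provided $\Omega'\subset\subset\Omega$ and $\epsilon<\textrm{dist}(\Omega',\partial\Omega)$, which keeps $B(x,2\epsilon)$ inside $\Omega$ and thus lets the genuine modulus of $F$ (rather than that of its zero extension) govern the estimates — the convergence is uniform on $\Omega'$. Claims \textit{iii)} and \textit{iv)} then follow from the classical theorem on differentiating a sequence: by part \textit{i)} of the preceding theorem (applicable since $C_D(\Omega)\subset L^\infty(\Omega)$) one has $v^\epsilon\to v$ uniformly, and by \textit{ii)} the derivatives $\partial_{x_j}v^\epsilon_i$ converge uniformly on each $\Omega'\subset\subset\Omega$ to $u_j^i$; hence $v_i$ is differentiable on $\Omega'$ with $\partial_{x_j}v_i=u_j^i$, and as $\Omega'$ is arbitrary this is \textit{iii)}. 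Since each $\partial_{x_j}v_i^\epsilon$ is smooth, its uniform limit $u_j^i=\partial_{x_j}v_i$ is continuous on $\Omega$, which is \textit{iv)}.

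The main obstacle is the $N_i\,\partial_{x_j}\eta$ term in \textit{ii)}: differentiating the cutoff produces a factor $\epsilon^{-1}$ that threatens to blow up, and it is saved only by the exact matching between the $|x-y|^{1-n}$ decay of $N_i$ and the $O(\epsilon^{n})$ volume of the annulus $\{\epsilon<|x-y|<2\epsilon\}$, which cancels the $\epsilon^{-1}$ and leaves the vanishing factor $\omega(2\epsilon)$. The one genuinely essential appeal to Dini continuity (beyond mere continuity) is in bounding $\int_0^{2\epsilon}\omega(\rho)\rho^{-1}\,d\rho$, which secures both the finiteness in \textit{i)} and the vanishing of the $(\eta-1)\partial_{x_j}N_i$ piece in \textit{ii)}; here one must also take care, via the $\Omega'\subset\subset\Omega$ restriction, that the zero extension of $F$ outside $\Omega$ does not corrupt the modulus of continuity seen near the boundary.
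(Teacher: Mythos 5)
Your proof is correct and follows essentially the same route as the paper: the identical decomposition of $\partial_{x_j}v^{\epsilon}_i-u^i_j$ into the $(\eta-1)\,\partial_{x_j}N_i$ piece, the $N_i\,\partial_{x_j}\eta$ piece, and the $\partial_j\psi$ piece, with the first controlled by the tail of the Dini integral via Theorem~\ref{thm:theorem9}~\textit{v)} and the other two by the $|x-y|^{1-n}$ kernel bound, followed by the classical theorem on uniform convergence of derivatives. The only deviations are cosmetic and harmless: you bound the cutoff-derivative term by $\omega(F,2\epsilon)$ through the annulus-volume computation where the paper keeps $\int_{\epsilon}^{2\epsilon}\omega(F,\rho)\rho^{-1}\,d\rho$, you bound the last term by $O(\epsilon)$ directly instead of via H\"older's inequality with $\|\eta-1\|_{L^q}$, and your admissibility condition should read $2\epsilon<\mathrm{dist}(\overline{\Omega'},\partial\Omega)$ (as in the paper) rather than $\epsilon<\mathrm{dist}(\Omega',\partial\Omega)$, since you need $B(x,2\epsilon)\subset\Omega$.
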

\begin{proof}
  To prove \textit{i)}, fix any $x\in\Omega$. Remark that, after its extension by zero
  outside $\Omega$, $F\in L^{\infty}(\R^{n})$. For any
  $\epsilon<\textrm{dist}(x,\partial\,\Omega)$ one has
\begin{equation*}
  \begin{aligned}
    &\int_{B_{R}} \left|F(y)-F(x)\right|\left|\partial_{x_{j}}N_{i}(x,y)\right|\,dy
    \\
    &=\int_{B(x,\epsilon)}\left|F(y)-F(x)\right|\left|\partial_{x_{j}}N_{i}(x,y)\right|\,dy
    \\
    &\qquad +\int_{\{|x-y|\geq \epsilon\}\cap
      B_{R}}\left|F(y)-F(x)\right|\left|\partial_{x_{j}}N_{i}(x,y)\right|\,dy
    \\
    &=:A+C.
\end{aligned}
\end{equation*}
Since $B(x,\epsilon)\subset\Omega$, by Theorem~\ref{thm:theorem9} \textit{v)} it follows that
\begin{equation*}
  \begin{aligned}
    A&\leq\int_{B(x,\epsilon)}
    \frac{\left|F(y)-F(x)\right|}{|y-x|}|y-x|\;|\partial_{x_{j}}N_{i}(x,y)|\,dy
    \\    
&\leq\int_{B(x,\epsilon)} \frac{\omega(|y-x|)}{|y-x|}\;\frac{M}{|y-x|^{n-1}}\,dy,
\end{aligned}
\end{equation*}
where $\omega$ is the modulus of continuity of $F$ in $\Omega$. By introducing the radial
and angular coordinates, the last integral becomes
\begin{equation*}
M|S^{n-1} | \int_{0}^{\epsilon} \frac{\omega(F,\rho)}{\rho}\,d\rho,
\end{equation*}
and, by the Dini continuity hypothesis on $F$, it is finite.

Furthermore, since both $F$ and $\partial_{x_{j}}N_{i}(x,y)$ are bounded on $\{|x-y|\geq
\epsilon\}$, the term  C is finite as well.

Finally, since $\partial_{j}\psi\in C^{\infty}_{0}(\R^{n})$ and $\supp \partial_{j}\psi
\subset B$, it follows that
\begin{equation*}
\int_{\Omega}(x_{i}-y_{i})\int_{1}^{\infty}\partial_{j}\psi(y+\alpha(x-y))\;\alpha^{n-1}d\alpha\,dy,
\end{equation*}
is the value of Bogovski\u\i's formula corresponding to the bounded function $F\equiv 1$,
evaluated by using $\partial_{j}\psi$ instead of $\psi$. By Theorem~\ref{thm:theorem1}
\textit{iii)}, it is globally bounded, and \textit{i)} follows.

To prove \textit{ii)}, fix any $\Omega'\subset\subset\Omega$. Thus, for any $x\in\Omega'$
and $\epsilon>0$ such that $2\epsilon<\textrm{dist}(\overline{\Omega'},\partial \Omega)$, it follows
that
\begin{equation*}
  \begin{aligned}
    |\partial_{x_{j}}&v^{\epsilon}_{i}(x) -u_{j}^{i}(x)|\leq
    \\
    &\leq \left|\int_{B_{R}}
      \left[F(y)-F(x)\right]\partial_{x_{j}}\left\{N_{i}(x,y)\;\left[\eta\left(\frac{|x-y|}{\epsilon}\right)-1\right]\right\}\,dy\;\right|
    \\
    &+\left|F(x)\int_{B_{R}}(x_{i}-y_{i})\int_{1}^{\infty}\partial_{x_{j}}\psi(y+\alpha(x-y))\;\left[\eta\left(\frac{|x-y|}{\epsilon}\right)-1\right]\;\alpha^{n-1}d\alpha\,dy\;\right|
    \\
    &\leq \int_{B(x,2\epsilon)} |F(x)-F(y)|\;|\partial_{x_{j}}N_{i}(x,y)|\,dy+
    \\
    &+\int_{B(x,2\epsilon)}
    |F(y)-F(x)|\;|N_{i}(x,y)|\;\left|\eta'\left(\frac{|x-y|}{\epsilon}\right)\frac{x_{j}-y_{j}}{|x-y|}\;\epsilon^{-1}\right|\,dy
    \\
    &+\int_{B(x,2\epsilon)}|F(x)|\;|x_{i}-y_{i}|\int_{1}^{\infty}|\partial_{x_{j}}\psi(y+\alpha(x-y))\;|\;\alpha^{n-1}d\alpha\,dy\;|
    \\
    &=:D+E+H.
\end{aligned}
\end{equation*}
As above, by Theorem~\ref{thm:theorem9} \textit{v)} it follows that
\begin{equation*}
D\leq M\;\int_{ B(x,2\epsilon)}\frac{
  |F(x)-F(y)|}{|y-x|^{n}}\,dy\leq M\;|S^{n-1}|\;\int_{\rho<2\epsilon}\frac{\omega(F,\rho)}{\rho}\,d\rho.
\end{equation*}
By the Dini continuity of $F$ and the consequent absolute continuity of the integral, the
last term vanishes as $\epsilon$ goes to zero, independently of $x\in\Omega'$.

In order to estimate the second term $E$ remark that, by Theorem~\ref{thm:theorem1}
\textit{v)} and the hypothesis on $\eta'$
\begin{equation*}
  \begin{aligned}
    E&\leq \int_{\epsilon\leq |x-y|\leq 2\epsilon}
    |F(x)-F(y)| \left|\eta'\left(\frac{|x-y|}{\epsilon}\right)\right|\frac{|x_{j}-y_{j}|}{|x-y|} \epsilon^{-1} \times
    \\
    & \times  \frac{|x_{i}-y_{i}|}{|x-y|^{n}}\int_{0}^{\infty}
    \left|\psi\left(x+r\frac{x-y}{|x-y|}\right)\right|(|x-y|+r)^{n-1}dr\,dy
    \\
    & \leq 4  \int_{\epsilon\leq |x-y|\leq 2\epsilon}
    \left|F(x)-F(y)\right| \frac{1}{|x-y|^{n}} \times
    \\
    & \qquad \times \int_{0}^{\infty}
    \left|\psi\left(x+r\frac{x-y}{|x-y|}\right)\right|(|x-y|+r)^{n-1}dr\,dy.
  \end{aligned}
\end{equation*}
By introducing the variable $y=x+\rho u$, since
\begin{equation*}
  \begin{aligned}
    \int_{0}^{\infty}|\psi(x+ru)|(\rho+r)^{n-1}\,dr&=\int_{0}^{1+|x|}
    \left|\psi(x+ru)\right|(\rho+r)^{n-1}\,dr
\\
&\leq \max_{\R^{n}}|\psi|\;(1+\diam\,\Omega+2\epsilon)^{n-1},
\end{aligned}
\end{equation*}
it follows as above that the last term is bounded by a multiple of
$\int_{\epsilon}^{2\epsilon}\frac{\omega(F,\rho)}{\rho}\,d\rho$ and, again by the absolute
continuity of the integral, $E$ vanishes as $\epsilon$ goes to zero, independently of
$x\in\Omega'$.

Finally, 
by using $\partial_{j}\psi$ instead of $\psi$ as in the proof of the previous
\textit{i)}, from Theorem~\ref{thm:theorem1} \textit{iii)} it follows that for any $q\,>n$ and suitable
constants $c', c''$
$$
|H|\,\leq \,c'\,\max_{\overline{\Omega}}|F(x)|\,\left\|
  \eta\left(\frac{|x-y|}{\epsilon}\right)-1\right\|_{L^{q}(\Omega)}\,\leq\,
$$
$$\leq c''\,
\left\|
  \eta\left(\frac{|x-y|}{\epsilon}\right)-1\right\|_{L^{q}\left(B(x,\,\diam\,\Omega)\right)}.
$$
Since the last norm vanishes as $\epsilon$ goes to zero, for any $q\,>\,n$ and independently of $x\in\Omega$, \textit{ii)} follows.\\

From \textit{ii)}, by the classical theorem on a convergent sequence of functions whose
derivatives converge uniformly, it follows \textit{iii)}, while \textit{ iv)} follows
immediately from \textit{ ii)}, \textit{iii)}, since $v^{\epsilon}\in C^{\infty}(\R^{n})$.
\end{proof}
All the previous results lead to the following theorem.

\begin{theorem}
  \label{thm:theorem14}
  Let $B=B(x_{0},R)$ be an open ball in $\R^{n}, n\geq 2$, and let $\psi$ be any function
  in $C_{0}^{\infty}(\R^{n})$, verifying $\supp \; \psi\subseteq B$ and
  $\int_{\R^{n}}\psi=1$.  Let $\Omega$ be a bounded open subset of $\R^{n}$, star-shaped
  with respect to every point of $\overline{B}$. Then, for any $F\in C_{D}(\Omega)$
  verifying $\int_{\Omega}F=0$, the Bogovski\u\i's formula
\begin{equation*}
  v(x)=\int_{\Omega}F(y)\left[\frac{x-y}{|x-y|^{n}}\int_{|x-y|}^{+\infty}\psi\left(y+\xi\frac{x-y}{|x-y|}\right)\xi^{n-1}d\xi\right]\,dy,
\end{equation*}
defines a solution $v\in C^{1}(\Omega)\cap C^{0}(\R^{n})$ of the problem
\begin{equation*}
\left\{
\begin{array}{ll}
  \dive v(x)\,=\,F(x)& \text{in }\Omega,
  \\
  v \equiv 0 & \text{on }\complement\Omega.
\end{array}
\right.
\end{equation*}
\end{theorem}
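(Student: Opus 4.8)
The plan is to assemble Theorem~\ref{thm:theorem14} directly from the results already established, since all the analytic work has been carried out in the preceding theorems; what remains is to verify that the hypotheses transfer and to reconcile the two independent descriptions of the limit of $\dive v^{\epsilon}$. First I would record that the general ball $B=B(x_{0},R)$ in the statement reduces to the standing hypotheses (unit ball centered at the origin) by a routine translation and dilation $x\mapsto(x-x_{0})/R$, under which the Bogovski\u\i's formula, the star-shapedness of $\Omega$, the normalization $\int_{\R^{n}}\psi=1$, the compatibility $\int_{\Omega}F=0$, and membership in $C_{D}$ are all preserved up to constants; hence it suffices to argue under the tacit assumptions used throughout. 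Next I would observe that $F\in C_{D}(\Omega)\subseteq C^{0}(\overline{\Omega})$, and since $\Omega$ is bounded this gives $F\in L^{q}(\Omega)$ for every $q>n$; thus the hypotheses of Theorems~\ref{thm:theorem1}, \ref{thm:theorem3}, \ref{thm:theorem11}, and~\ref{thm:theorem13} are all in force.

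I would then obtain the regularity and the boundary behaviour by direct quotation. By Theorem~\ref{thm:theorem13}~\textit{iv)} we have $v\in C^{1}(\Omega)$; by Theorem~\ref{thm:theorem3} we have $v\in C^{0}(\R^{n})$, which in particular yields continuity up to and across $\partial\Omega$; and by Theorem~\ref{thm:theorem1}~\textit{ii)} we have $v(x)=0$ for every $x\notin\Omega$, that is, $v\equiv 0$ on $\complement\Omega$. Together these furnish $v\in C^{1}(\Omega)\cap C^{0}(\R^{n})$ with the required vanishing on the complement.

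The one genuine step is the identification $\dive v=F$ in $\Omega$, and it is here that the two halves of the construction meet. On one hand, Theorem~\ref{thm:theorem13}~\textit{ii)} asserts that $\partial_{x_{j}}v^{\epsilon}_{i}\to u^{i}_{j}$ uniformly on every $\Omega'\subset\subset\Omega$, while Theorem~\ref{thm:theorem13}~\textit{iii)} identifies $u^{i}_{j}=\partial_{x_{j}}v$ on $\Omega$; summing over $i=j$ shows that $\dive v^{\epsilon}\to\dive v$ pointwise (indeed locally uniformly) in $\Omega$ as $\epsilon\to 0$. On the other hand, Theorem~\ref{thm:theorem11} --- which uses precisely the normalization $\int_{\R^{n}}\psi=1$, the continuity $F\in C^{0}(\overline{\Omega})$, and the compatibility $\int_{\Omega}F=0$ --- gives $\dive v^{\epsilon}(x)\to F(x)$ for every $x\in\Omega$. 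Since a sequence admits at most one pointwise limit, the two computations must agree, and therefore $\dive v(x)=F(x)$ for all $x\in\Omega$.

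I expect no serious obstacle, as the theorem is essentially a synthesis of earlier work; the only point requiring care is the last one, namely checking that the family $\{\dive v^{\epsilon}\}$ appearing in Theorems~\ref{thm:theorem11} and~\ref{thm:theorem13} is literally the \emph{same} object, so that their two limits may legitimately be equated. This is immediate from the common definition of $v^{\epsilon}$, but it is the hinge on which the argument turns, and I would state it explicitly rather than leave it implicit.
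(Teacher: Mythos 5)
Your proposal is correct and follows essentially the same route as the paper: reduce the general ball $B(x_{0},R)$ to the unit-ball case by translation and dilation, then assemble the unit-ball statement from Theorem~\ref{thm:theorem1}~\textit{ii)}, Theorem~\ref{thm:theorem3}, Theorem~\ref{thm:theorem11} and Theorem~\ref{thm:theorem13}, identifying $\dive v = F$ by equating the two limits of $\dive v^{\epsilon}$. The only difference is one of emphasis: the paper is more explicit on the scaling step (it constructs $v(x)=R\,w\left(\frac{x-x_{0}}{R}\right)$ and verifies the chain-rule identity for the divergence), whereas you are more explicit on the uniqueness-of-limits argument that the paper compresses into ``follows immediately.''
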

\begin{proof}
  At first observe that, if $B=B(0,1)$, the theorem follows immediately from
  Theorem~\ref{thm:theorem1} \textit{ii)}, Theorem~\ref{thm:theorem3},
  Theorem~\ref{thm:theorem11} and Theorem~\ref{thm:theorem13}.

  Otherwise, let us set $z=(x-x_{0})/R$, $\widetilde{\Omega}=\{(x-x_{0})/R:x\in\Omega \}$,
  $\widetilde{F}(z)=F(x_{0}+Rz)$ and remark that $\widetilde{\Omega}$ and $\widetilde{F}$ fulfil the
  previous hypotheses with respect to $B=B(0,1)$.  Now, let $w(z)$ be the solution of
  \begin{equation*}
\left\{
  \begin{array}{ll}
    \dive w(z)\,=\,\widetilde{F}(z)& \text{in } \widetilde{\Omega},
    \\
    w \equiv 0 & \text{on }  \complement\widetilde{\Omega},
  \end{array}
\right.
\end{equation*}
whose existence follows by the initial observation, and remark that
\begin{equation*}
  \dive\left[R\;w\left(\frac{x-x_{0}}{R}\right)\right]=(\dive
  w)\left(\frac{x-x_{0}}{R}\right)=
\widetilde{F}\left(\frac{x-x_{0}}{R}\right)=F(x).
\end{equation*}
Therefore
\begin{equation*}
  v(x)=R\;w\left(\frac{x-x_{0}}{R}\right),
\end{equation*}
is the requested solution, and the proof is completed.
\end{proof}
\section{Existence of classical solutions for the divergence problem in more general
  domains.}
The aim of this brief section is to relax the very strong geometric restriction on the
domain $\Omega$ requested in the above result, although at the price to renounce the
simplicity of a single Bogovski\u\i's representation formula for the solution of the
divergence problem. 

The next theorem provides the existence of a classical solution in a wider class of
domains including, for instance, those with a smooth boundary. To this aim, we start to
prove a suitable ``partition of unity'' lemma.
\begin{lemma}
  \label{thm:partition}
  Let $\Omega$ be a bounded subset of $\R^{n}$ with a locally Lipschitz boundary.  Then,
  there exists an open covering $\mathcal{G}=\{G_{1},\dots, G_{m}, G_{m+1},\,\dots\,,
  G_{m+p}\}$ of $\overline{\Omega}$ such that, if one sets $\Omega_{i}:= \Omega\cap G_{i}
  $ it follows that:
  \begin{itemize}
  \item
    $\Omega_{i}$ is star-shaped with respect to every point of an open ball $B_{i}$, with
    $\overline{B}\subset\Omega$ for any $i=1,\dots,m+p$; 
  \item
    $\partial\Omega\subset\cup_{1}^{m}G_{i};$
  \item
    $G_{i}$ is an open ball with closure in $\Omega$ for any $i=m+1,\dots,m+p$;
  \item
    $\Omega=\cup_{1}^{m+p}\Omega_{i}$.
  \end{itemize}
  Furthermore, for any fixed $F\in C_{D}(\Omega)$ with $\int_{\Omega}F=0$, there exist $F_{i}\in C_{D}(\Omega)$ such that:
  \begin{enumerate}
  \item 
    $F_{i}\;\equiv 0\quad on\;\Omega\backslash \Omega_{i}$ for any $i=1,\dots,m+p$
  \item
    $F\;\equiv\;\sum_{1}^{m+p}F_{i}$  on $\Omega$
  \item
    $\int_{\Omega}F_{i}=0$ for any $i=1,\dots,m+p$
  \end{enumerate}
  \begin{proof}
    The proof if this result may be obtained as in Galdi~\cite[Lemma~III.3.4]{Gal2011}, by
    replacing $C_{0}^{\infty}$ with $C_{D}$ in any occurrence involving $f$, $f_{i}$ or
    $g_{i}$, by assuming $\Omega$ as their domain, and by extending $\psi_{i}$ and
    $\chi_{i}$ by zero outside their supports.
  \end{proof}
\end{lemma}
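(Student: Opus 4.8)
The plan is to separate the construction into a purely geometric step (building the covering $\mathcal{G}$ and the star-shaped pieces $\Omega_i$) and an analytic step (splitting $F$ while keeping each summand in $C_D(\Omega)$ with vanishing mean), following the scheme of Galdi~\cite[Lemma~III.3.4]{Gal2011} and checking that every operation used there is compatible with Dini continuity.

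First I would handle the covering. Since $\partial\Omega$ is locally Lipschitz, each boundary point $x_0$ has a neighborhood in which, after a rigid motion, $\Omega$ coincides with the epigraph $\{(y',y_n):y_n>\gamma(y')\}$ of a Lipschitz function $\gamma$. The classical fact that the region lying above a Lipschitz graph is, inside a sufficiently small ball, star-shaped with respect to every point of a suitable interior ball, produces for each such $x_0$ an open ball $G$ and an interior ball $B$ with $\overline{B}\subset\Omega$ such that $\Omega\cap G$ is star-shaped with respect to every point of $B$. By compactness of $\partial\Omega$ I extract a finite subfamily $G_1,\dots,G_m$ covering $\partial\Omega$, with associated interior balls $B_1,\dots,B_m$. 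The set $K:=\overline{\Omega}\setminus\bigcup_{1}^{m}G_i$ is then compact and contained in the open set $\Omega$; I cover it by finitely many balls $G_{m+1},\dots,G_{m+p}$ whose closures lie in $\Omega$, each of which is trivially star-shaped with respect to every point of a concentric ball $B_i$. Setting $\Omega_i:=\Omega\cap G_i$ gives at once the four bulleted properties.

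Next I build the functions $F_i$. Let $\{\theta_i\}_{i=1}^{N}$, $N:=m+p$, be a smooth partition of unity subordinate to $\{G_i\}$, so that $\supp\theta_i\subset G_i$ and $\sum_i\theta_i\equiv1$ on $\overline{\Omega}$, and set $f_i:=\theta_i F$, extended by zero outside $\Omega_i$. Then $\supp f_i\subset\Omega_i$ and $\sum_i f_i=F$, so (i) and (ii) already hold for the $f_i$; what fails is the mean condition, the obstruction being the numbers $c_i:=\int_\Omega f_i$, which however satisfy $\sum_i c_i=\int_\Omega F=0$. To cancel them I redistribute mass along the cover. Assuming $\Omega$ connected (otherwise one argues on each component, where the compatibility $\int F=0$ is used), the nerve of $\{\Omega_i\}$ is connected, so after relabelling I may assume $\Omega_k\cap\Omega_{k+1}\neq\emptyset$ for $k=1,\dots,N-1$ (in general one transfers mass along a spanning tree of the nerve). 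I choose $g_k\in C_0^\infty(\Omega_k\cap\Omega_{k+1})$ with $\int_\Omega g_k=1$, put $g_0:=g_N:=0$ and, writing $d_k:=\sum_{j\le k}c_j$ (so that $d_0=d_N=0$), I define
\begin{equation*}
F_k:=f_k-d_k\,g_k+d_{k-1}\,g_{k-1},\qquad k=1,\dots,N.
\end{equation*}
Since $g_{k-1}$ and $g_k$ are supported in $\Omega_k$, property (i) is preserved; the sum telescopes, giving $\sum_k F_k=F-(d_N g_N-d_0 g_0)=F$, which is (ii); and $\int_\Omega F_k=c_k-d_k+d_{k-1}=0$, which is (iii).

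The step I expect to be the only genuine departure from the $C_0^\infty$ argument of Galdi, and hence the one requiring care, is verifying that each $F_k$ lies in $C_D(\Omega)$. The functions $g_k$ are smooth with compact support, hence Dini-continuous, and the $d_k$ are constants, so it suffices to show $f_i=\theta_i F\in C_D(\Omega)$. This follows from the elementary modulus-of-continuity estimate
\begin{equation*}
\omega(\theta_i F,\rho)\le\|\theta_i\|_{L^\infty}\,\omega(F,\rho)+\|F\|_{L^\infty}\,\omega(\theta_i,\rho)\le\|\theta_i\|_{L^\infty}\,\omega(F,\rho)+L_i\,\|F\|_{L^\infty}\,\rho,
\end{equation*}
where $L_i$ is a Lipschitz constant for $\theta_i$: dividing by $\rho$, the first term is integrable near $0^+$ because $F\in C_D(\Omega)$, while the second contributes a constant, integrable on the bounded interval $(0,\diam\Omega)$. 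Extending $\theta_i$ (and thus $f_i$) by zero outside its support does not affect this, since $\theta_i$ vanishes smoothly near $\partial G_i$. This confirms $F_i\in C_D(\Omega)$ and completes the construction.
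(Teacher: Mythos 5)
Your analytic half is correct and is, in substance, exactly the argument of Galdi's Lemma~III.3.4 to which the paper's proof simply defers: a partition of unity $\{\theta_i\}$ subordinate to the covering, the telescoping correction $F_k=f_k-d_k g_k+d_{k-1}g_{k-1}$ along a chain (or spanning tree) of the nerve to restore the zero means, and the one genuinely $C_D$-specific point, namely the product estimate $\omega(\theta_i F,\rho)\le\|\theta_i\|_{L^\infty}\omega(F,\rho)+L_i\|F\|_{L^\infty}\rho$, which is precisely the ``replace $C_0^\infty$ by $C_D$'' instruction that constitutes the paper's entire proof. If that were all, your proposal would match the intended route with the details filled in.

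The genuine gap is in your geometric step. The ``classical fact'' you invoke --- that the region above a Lipschitz graph, intersected with a \emph{sufficiently small ball}, is star-shaped with respect to every point of an interior ball --- is false once the Lipschitz constant $L$ is of order $1$, and shrinking the ball cannot help because the failure is scale invariant. Concretely, in $\R^2$ take $\gamma$ with $\gamma(0)=0$ equal, at scale $r$, to a sawtooth with valleys at $y_1=-a,0,a$ (with $a=0.9\,r$) and intermediate peaks of height $La/2$. A star centre $(P,H)$ for $\{y_2>\gamma(y_1)\}\cap B(0,r)$ must see points arbitrarily close to each valley bottom, which forces $H> L\max\bigl(|P-a|,|P|,|P+a|\bigr)\ge La=0.9\,Lr$, while every point of the ball has $H<r$; so for $L\ge 10/9$ no star centre exists at all. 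A self-similar sawtooth (teeth of size comparable to $2^{-k}$ at distance $\sim 2^{-k}$ from the origin) defeats every radius simultaneously, for one fixed Lipschitz function. This is exactly why the standard covering lemma behind Galdi's proof does not use balls near the boundary but, in the local coordinates, cylinders $\{|y'|<\alpha\}\times\{|y_n|<\beta\}$ with $\beta$ a large multiple of $(1+L)\alpha$: inside such a box the set $\{\gamma(y')<y_n<\beta,\ |y'|<\alpha\}$ is star-shaped with respect to every point of a small ball near the top, since the needed visibility condition $p_n\ge\gamma(x')+L|x'-p'|$ can be arranged when the height dominates $L$ times the width. Note that the lemma you are proving does not require $G_1,\dots,G_m$ to be balls --- only the interior sets $G_{m+1},\dots,G_{m+p}$ --- so your proof is repaired simply by replacing ``ball $G$'' by such a Lipschitz-adapted cylinder; but as written the step fails. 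A second, smaller defect: your parenthetical treatment of disconnected $\Omega$ does not work, since $\int_\Omega F=0$ does not localize to connected components, and in fact the decomposition with all $\int_\Omega F_i=0$ is impossible when a component carries nonzero mass (each star-shaped $\Omega_i$ lies in a single component); connectedness of $\Omega$ must be assumed, as it implicitly is via the word ``domain'' in Galdi's statement.
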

\begin{remark}
  We remark explicitly that from \textit{i)} and \textit{iii)} it follows immediately the
  crucial property
\begin{equation*}
 \int_{\Omega_{i}}F_{i}(x)\,dx=0,
\end{equation*}
which, together with the properties of the covering, allows to apply the regularity result
in Theorem~\ref{thm:theorem14} to the divergence problem ''localized'' at $\Omega_{k}$.
\end{remark}
The final result, which extends Theorem~\ref{thm:theorem14} to a considerably wider class
of domains, will be now obtained by a localization argument.
\begin{theorem}
  Let $\Omega$ be a bounded subset of $\R^{n}$ with a locally Lipschitz boundary.  Then,
  for any $F\in C_{D}(\Omega)$ with $\int_{\Omega}F(x)\,dx=0$ there exists a solution
  $v\in C^{1}(\Omega)\cap C^{0}(\overline{\Omega}) $ of the problem
  \begin{equation*}
    \left\{
      \begin{aligned}
        \dive v(x)&=F(x)\qquad &\text{in } \Omega,
        \\
        v &\equiv 0 \qquad&\text{on } \partial\Omega.
      \end{aligned}
    \right.
  \end{equation*}
\end{theorem}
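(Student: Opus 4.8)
The plan is to reduce the general domain problem to the star-shaped case already solved in Theorem~\ref{thm:theorem14}, via the partition-of-unity machinery provided by Lemma~\ref{thm:partition}. The final theorem is, in fact, an almost immediate corollary of that lemma combined with Theorem~\ref{thm:theorem14}, so the work is essentially bookkeeping and gluing rather than new analysis.

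First I would invoke Lemma~\ref{thm:partition} to obtain the covering $\mathcal{G}=\{G_{1},\dots,G_{m+p}\}$ together with the decomposition $F=\sum_{1}^{m+p}F_{i}$, where each $F_{i}\in C_{D}(\Omega)$ vanishes outside $\Omega_{i}:=\Omega\cap G_{i}$ and satisfies $\int_{\Omega_{i}}F_{i}=0$ (as noted in the remark following the lemma). The key point is that each piece $F_{i}$, restricted to $\Omega_{i}$, meets exactly the hypotheses of Theorem~\ref{thm:theorem14}: the subdomain $\Omega_{i}$ is star-shaped with respect to every point of a ball $B_{i}$, the restricted datum is Dini-continuous, and its integral over $\Omega_{i}$ vanishes. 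Hence for each $i$ there is a local solution $v_{i}\in C^{1}(\Omega_{i})\cap C^{0}(\R^{n})$ of the localized problem $\dive v_{i}=F_{i}$ in $\Omega_{i}$, with $v_{i}\equiv 0$ on $\complement\Omega_{i}$. I would then set $v:=\sum_{1}^{m+p}v_{i}$.

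Next I would verify that $v$ solves the global problem. Since each $v_{i}$ is extended by zero outside $\Omega_{i}$ and vanishes there continuously, the sum $v$ is continuous on all of $\R^{n}$, so in particular $v\in C^{0}(\overline{\Omega})$ with $v\equiv 0$ on $\partial\Omega$. For the divergence, fix any $x\in\Omega$. One must observe that near such an $x$ each $v_{i}$ is $C^{1}$ (where $x\in\Omega_{i}$, this is the local regularity; where $x\notin\overline{\Omega_{i}}$, the function $v_{i}$ vanishes identically in a neighborhood, hence is trivially smooth there). Summing the local equations gives
\begin{equation*}
\dive v(x)=\sum_{i=1}^{m+p}\dive v_{i}(x)=\sum_{i=1}^{m+p}F_{i}(x)=F(x),
\end{equation*}
using property \textit{ii)} of the partition. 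Thus $v\in C^{1}(\Omega)$ and $\dive v=F$ in $\Omega$, completing the proof.

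The only delicate point, and the one I would treat with care, concerns the boundary behavior of the individual $v_{i}$ at points of $\partial\Omega$ that do not lie in $\partial\Omega_{i}$. Because Theorem~\ref{thm:theorem14} yields each $v_{i}\in C^{0}(\R^{n})$ vanishing on $\complement\Omega_{i}$, continuity across these interfaces is automatic and no compatibility of the pieces along $\partial\Omega_{i}$ need be imposed; this is precisely why the non-uniqueness of the first-order system, exploited through the vanishing-trace property of the Bogovski\u\i{} formula emphasized in the earlier remark, makes the localization clean. The regularity $v\in C^{1}(\Omega)$ follows since $\Omega=\cup_{1}^{m+p}\Omega_{i}$ and $C^{1}$-smoothness is a local property verified on each $\Omega_{i}$.
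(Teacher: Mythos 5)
Your proof is essentially identical to the paper's: both invoke Lemma~\ref{thm:partition} to decompose $F=\sum_{i}F_{i}$ with $\int_{\Omega_{i}}F_{i}=0$, solve each localized problem on the star-shaped pieces via Theorem~\ref{thm:theorem14}, and glue by summing the zero-extended local solutions, checking continuity on $\R^{n}$, the vanishing on $\partial\Omega$, and the pointwise divergence identity in exactly the same way. The only remark worth making is that your dichotomy ($x\in\Omega_{i}$ versus $x\notin\overline{\Omega_{i}}$) silently omits the interface points $x\in\Omega\cap\partial\Omega_{i}$, where differentiability of the zero-extension of $v_{i}$ is not immediate from Theorem~\ref{thm:theorem14} alone --- but the paper's own proof is equally terse at precisely this step, so your argument matches it in both structure and level of detail.
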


\begin{proof}
  Let $\Omega_{k}$ and $F_{k}$ be defined as in the previous lemma, and let $v_{k}$ be the
  solution in $C^{1}(\Omega_{k})\cap C^{0}(\R^{n})$ of the problem
  \begin{equation*}
\left\{
  \begin{aligned}
    \dive v_{k}(x)&=F_{k}(x) \qquad  &\text{in } \Omega_{k},
    \\
    v_{k} &\equiv 0\qquad  &\text{on }\complement\Omega_{k},
  \end{aligned}
\right.
\end{equation*}
whose existence is ensured by Theorem~\ref{thm:theorem14}, Lemma~\ref{thm:partition}, and
the last remark.  Thus, by setting
 \begin{equation*}
v(x)=\sum_{1}^{m+p}v_{k}(x),
\end{equation*}
one obtains $v\in C^{0}(\R^{n})$. Moreover, since $v_{k}$ vanishes on
$\complement\Omega_{k}$ then $v\equiv 0$ on $\partial\Omega$ and
 \begin{equation*}
v(x)=\sum_{k:\;\Omega_{k}\ni x}v_{k}(x),
\end{equation*}
and hence $v\in C^{1}(\Omega)$. 

Finally, for any $x\in\Omega$
 \begin{equation*}
\dive v(x)=\sum_{k:\;\Omega_{k}\ni x}\dive v_{k}(x)=\sum_{k:\;\Omega_{k}\ni x}F_{k}(x),
\end{equation*}
and since by Lemma~\ref{thm:partition} \textit{i)} and \textit{ii)} the last term is equal
to $F(x)$, and then $v$ is the aimed solution.
\end{proof}

\section*{Acknowledgments}
The research that led to the present paper was partially supported by a grant of the group GNAMPA of INdAM.

\def\ocirc#1{\ifmmode\setbox0=\hbox{$#1$}\dimen0=\ht0 \advance\dimen0
  by1pt\rlap{\hbox to\wd0{\hss\raise\dimen0
  \hbox{\hskip.2em$\scriptscriptstyle\circ$}\hss}}#1\else {\accent"17 #1}\fi}
  \def\cprime{$'$} \def\polhk#1{\setbox0=\hbox{#1}{\ooalign{\hidewidth
  \lower1.5ex\hbox{`}\hidewidth\crcr\unhbox0}}} \def\cprime{$'$}

\end{document}